\newtheorem{theorem}{Theorem}[section]
\newtheorem{proposition}[theorem]{Proposition}
\newtheorem{lemma}[theorem]{Lemma}
\newtheorem{remark}[theorem]{Remark}
\newtheorem{definition}[theorem]{Definition}
\numberwithin{equation}{section}
\begin{document}


\title[Sparse domination for singular integral operators in Dunkl setting]{Sparse domination for singular integral operators and their commutators in Dunkl setting with applications}

\author[Y.P. Chen, X.T. Han]{Yanping Chen,
 Xueting Han$^{*}$
}

\thanks{$^*$ Corresponding author}

\address{Yanping Chen, Department of Mathematics, Northeastern University, Shenyang, 110004, Liaoning, China}
\email{yanpingch@126.com}

\address{School of Mathematics and Physics, University of Science and Technology Beijing,
Beijing 100083, China and School of Mathematical and Physical Sciences, Macquarie University, NSW 2109, Australia}
\email{hanxueting12@163.com}


\subjclass[2010]{Primary: 42B35.
Secondary: 42B25, 42B20}

\keywords{Dunkl setting,
Dunkl-Calder\'on-Zygmund operator, commutator, weight}

\thanks{
}


\begin{abstract}
In this paper, we establish sparse dominations for the Dunkl-Calder\'on-Zygmund operators and their commutators in the Dunkl setting. As applications, we first define the Dunkl-Muckenhoupt $A_p$ weight and obtain the weighted bounds for the Dunkl-Calder\'on-Zygmund operators, as well as the two-weight bounds for their commutators. Moreover, we also obtain the boundedness of the Dunkl-Calder\'on-Zygmund operators on the extrapolation space of a family of Banach function spaces.
\end{abstract}

\maketitle

\section{Introduction}

Fourier transform plays a key role in classic analysis (see \cite{St2}). Over the last several decades, Dunkl transform has been introduced as a parallel theory to Fourier transform, serving as another important tool in the analysis on Euclidean spaces (see for example \cite{ABFR, AH, ADH, AT, BCG, BCV, BR, Dzi2016, DH2022, HLLW2023, HLLW}).

 Dunkl transform was introduced by Dunkl \cite{Dunkl} under the action of a reflection group. Specifically, given a nonzero vector $\upsilon$, the reflection $\sigma_\upsilon$ with respect to the hyperplane
orthogonal to $\upsilon$ is given by
$$
    \sigma_\upsilon (x)=x-2\frac{\langle x,\upsilon\rangle}{\|\upsilon\|^2}\upsilon,
$$
where $\|\cdot\|$ denotes the Euclidean norm in $\mathbb{R}^N$.
A finite set $R\subseteq\mathbb{R}^N\setminus\{0\}$ is called a root system if $\sigma_{\upsilon} (R)=R$ for every $\upsilon\in R$. In the following, we consider normalized root systems satisfying
$\langle \upsilon,\upsilon \rangle=2$ for all $\upsilon\in R$. Therefore, the reflection $\sigma_{\upsilon}(x)=x-\langle \upsilon, x \rangle \upsilon$ for any $\upsilon\in R$ and $ x\in \mathbb{R}^N$.
A finite group $G$ generated by the set of reflections $\{\sigma_{\upsilon}: \upsilon \in R\}$
 is called the reflection group of the root system $R$. Denote by
$$
    \mathcal{O}(x):=\big\{\sigma (x): \sigma\in G\big\}
$$
 the $G$-orbit of the point $x\in\mathbb{R}^N$.
 The distance between two $G$-orbits $\mathcal O(x)$ and $\mathcal O(y)$ is said to be the Dunkl metric $d$, defined by
$$
    d(x,y):=\min_{\sigma \in G} \|x-\sigma(y)\|.
$$
 It is straightforward to see $d(x,y)\leq\|x-y\|$. The Dunkl ball $\mathcal{O}(B)$ is defined as
$$
    \mathcal{O}(B(x,r)):=\big\{y\in \mathbb{R}^N:d(y,x)<r\big\}.
$$

 Given a root system $R$ and a fixed multiplicity function $\kappa$ which is a nonnegative $G$-invariant function defined on $R$, the $G$-invariant homogeneous weight function $h_{\kappa}$ is defined as
$$
h_{\kappa}(x):=\prod_{\upsilon \in R}|\langle \upsilon, x\rangle|^{\kappa(\upsilon)}.
$$
The associated Dunkl measure is then given by
$$
    d\omega(x):=h_{\kappa}(x)dx=\prod_{\upsilon\in R}|\langle \upsilon, x\rangle|^{\kappa(\upsilon)}
dx.
$$
Denote by $\omega(\mathcal{O}(B))=\int_{\mathcal{O}(B)}d\omega (x)$.
Let $\gamma_{\kappa}=\sum_{\upsilon\in R}\kappa(\upsilon)$, $\mathbf{N}=N+\gamma_{\kappa}$ is the homogeneous dimension associated with the Dunkl setting. Note that $d\omega$ is a doubling measure.

On the space of homogeneous type in the sense of
Coifman and Wiess $(\Bbb R^N, \|\cdot\|, d\omega)$, we recall the space $C^\eta_0(\Bbb
R^N), \eta>0$ which consists the continuous and compactly supported functions $f$ satisfying
$$\|f\|_{\boldsymbol{\eta}}:=\sup\limits_{x\ne y} \frac{|f(x)-f(y)|}{\|x-y\|^{\boldsymbol{\eta}}}<\infty. $$

  In order to study the Littlewood-Paley square functions in the rational Dunkl setting, Dziuba\'{n}ski and Hejna \cite{DH2022CZ} introduced the Dunkl-Calder\'on-Zygmund operator which is
  defined as follows:
    \begin{definition}\label{defT}
    An operator $T: C_0^\eta(\mathbb{R}^N)\rightarrow(C_0^\eta(\mathbb{R}^N))'$ with $\eta>0,$ is said to be a Dunkl-Calder\'on-Zygmund singular integral operator if $K(x,y),$ the kernel of $T$,
    satisfies the following estimates: for some $0<\varepsilon\leqslant 1,$
    \begin{equation}\label{siCZ}
    |K(x,y)|\lesssim \frac1{\omega(B(x,d(x,y)))}
    \end{equation}
    for all $x\not= y;$
    \begin{equation}\label{smooth y3CZ}
    |K(x,y)-K(x,y')|\lesssim \boldsymbol{\Big(\frac{\|y-y'\|}{d(x,y)}\Big)^{\varepsilon}}\frac{1}{\omega(B(x,d(x,y)))}
    \end{equation}
    for $\|y-y'\|\leqslant  d(x,y)/2;$
    \begin{equation}\label{smooth x3CZ}
    |K(x',y)-K(x,y)|\lesssim \boldsymbol{\Big(\frac{\|x-x'\|}{d(x,y)}\Big)^{\varepsilon}}
    \frac1{\omega(B(x,d(x,y)))}
    \end{equation}
    for $\|x-x'\|\leqslant  d(x,y)/2.$

 Moreover,
    \begin{equation*}
    \langle T(f),g\rangle=\int_{\mathbb{R}^N}\int_{\mathbb{R}^N} K(x,y)f(x)g(y)d\omega(x)d\omega(y)
    \end{equation*}
   for $ \operatorname{supp} f \cap \operatorname{supp}g=\emptyset.$

    A Dunkl-Calder\'on-Zygmund singular integral operator is said to be the Dunkl-Calder\'on-Zygmund operator if it extends a bounded operator on $L^2(\mathbb{R}^N,d\omega).$
\end{definition}
A classic example of the Dunkl-Calder\'on-Zygmund operator is Riesz type Dunkl-Calder\'on-Zygmund operators, denoted by $T_R$, first introduced in \cite{THHLL2022} (see also \cite{TanHanLi2023}), whose kernel $K_R$ satisfies the stronger size and smoothness conditions, for some $0<\varepsilon'\leq 1$:
   \begin{equation}\label{si}
    |K_R(x,y)|\lesssim \frac1{\omega(B(x,d(x,y)))}
    \boldsymbol{\Big(\frac{d(x,y)}{\|x-y\|}\Big)^{\varepsilon'}}
    \end{equation}
    for all $x\not= y;$
  \begin{equation}\label{smooth y3}
    |K_R(x,y)-K_R(x,y')|\lesssim \boldsymbol{\Big(\frac{\|y-y'\|}{\|x-y\|}\Big)^{\varepsilon'}}\frac{1}{\omega(B(x,d(x,y)))}
   \end{equation}
    for $\|y-y'\|\leqslant  d(x,y)/2;$
   \begin{equation}\label{smooth x3}
    |K_R(x',y)-K_R(x,y)|\lesssim \boldsymbol{\Big(\frac{\|x-x'\|}{\|x-y\|}\Big)^{\varepsilon'}}
    \frac1{\omega(B(x,d(x,y)))}
    \end{equation}
    for $\|x-x'\|\leqslant  d(x,y)/2.$

In the particular case $\varepsilon' =1$, \eqref{si}-\eqref{smooth x3} are exactly the conditions for the kernel of the Dunkl-Riesz transform $R_j$, $j=1,\ldots,N$ (see \cite[Theorem 1.1]{HLLW}).

 In modern harmonic analysis, sparse domination is a powerful tool and has attracted lots of attention. This idea was first established by Lerner in \cite{Lerner2013} to provide a new proof for the $A_2$ theorem obtained by Hyt\"{o}nen in \cite{Hytonen2012}. Since then, sparse domination has been extensively studied for Calder\'{o}n-Zygmund operator by Lerner and Lacey in \cite{ Lacey2017, Lerner2013-2, Lerner2016}, as well as for many other operators (see for example \cite{CCDO2017, HLSW2021, LaceyMenaRe2017, LaceySpencer2017, SVW2022}).

In this paper, we will explore the sparse domination for the Dunkl-Calder\'on-Zygmund operator, a pointwise domination by localized and averaging dyadic operators:
$$
\sum_{Q \in \mathscr{S}} |f|_{Q} \mathbf{1}_{Q}(x)
$$
Here, $\theta$-sparse family $\mathscr{S}$ $(0<\theta<1)$ is a collection of some dyadic cubes $Q$ (see \cite{HK2012}) in $(\mathbb{R}^N,d\omega)$ such that for every $Q \in \mathscr{S}$, there is a measurable subset $E(Q) \subset Q$ with $\omega(E(Q)) \geq \theta \omega(Q)$ and the sets $\{E(Q)\}_{Q \in \mathscr{S}}$ have only finite overlap. $|f|_{Q}$ is the average of $f$ over the cube $Q$.
Throughout this paper, the constant $\theta$ plays a minor role. Therefore, we will not focus on its precise calculation.

We will also study the sparse domination of the commutator which is defined by
$$
    [b, T](f)(x)=b(x) T(f)(x)-T(b f)(x)
$$
for functions $b\in L_{l o c}^1(\mathbb{R}^N, d \omega)$. The related results are stated as follows.

\begin{theorem}\label{sparseforT}
    Suppose that $T$ is a Dunkl-Calder\'on-Zygmund operator and $\mathscr{D}^d$ is a dyadic system associated to Dunkl metric in $\mathbb{R}^N$.
    \begin{enumerate}
      \item \label{sparseT}
      For any function $f$ with compact support. Then there exists a sparse family $\mathscr{S}^d \subset \mathscr{D}^d$ such that for almost every $x \in \mathbb{R}^N$, we have
    $$
   |Tf(x)|\lesssim \sum_{Q^d \in \mathscr{S}^d} |f|_{Q^d} \mathbf{1}_{Q^d}(x).
    $$
    \item \label{sparsebT}
    Let $b \in L_{\text {loc }}^1(\mathbb{R}^N,d\omega
)$. For every function $f$ with compact support, there exists a sparse family $\mathscr{S}^d \subset \mathscr{D}^d$ such that for almost every $x \in \mathbb{R}^N$, we have
\begin{align*}
 |[b,T]f (x)|
\lesssim \sum_{Q^d \in \mathscr{S}^d}
\left\{
|f|_{Q^{d}}|b(x)-b_{Q^{d}}| +
|(b-b_{Q^{d}})f|_{Q^{d}} \right\}\mathbf{1}_{Q^{d}}(x).
\end{align*}
    \end{enumerate}
    \end{theorem}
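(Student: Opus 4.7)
The plan is to adapt the Lerner--Nazarov recursive stopping-time construction to the space of homogeneous type $(\mathbb{R}^N, d, \omega)$, exploiting the Hyt\"onen--Kairema dyadic system $\mathscr{D}^d$ adapted to the Dunkl metric. The central tool is a grand maximal truncation: I introduce
$$
\mathcal{M}_T^{\#} f(x) := \sup_{Q^d \ni x}\ \operatorname*{ess\,sup}_{\xi,\xi' \in Q^d} \bigl|T\bigl(f\mathbf{1}_{\mathbb{R}^N \setminus 3Q^d}\bigr)(\xi) - T\bigl(f\mathbf{1}_{\mathbb{R}^N \setminus 3Q^d}\bigr)(\xi')\bigr|,
$$
where $3Q^d$ denotes a suitable Dunkl-metric dilate of $Q^d$. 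Using the size bound \eqref{siCZ}, the smoothness estimates \eqref{smooth y3CZ}--\eqref{smooth x3CZ}, and the $L^2(d\omega)$-boundedness of $T$, I would prove that $\mathcal{M}_T^{\#}$, together with the Dunkl Hardy--Littlewood maximal operator $M$, satisfies a weak-type $(1,1)$ estimate with respect to $d\omega$. The argument uses a Calder\'on--Zygmund decomposition in the doubling space $(\mathbb{R}^N, d, \omega)$, where the non-local difference is absorbed by \eqref{smooth x3CZ} applied to $\xi,\xi' \in Q^d$, and the standard tail estimate for the good part uses \eqref{smooth y3CZ}.

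For the first assertion, fix $Q_0 \in \mathscr{D}^d$ with $\operatorname{supp} f$ contained in a fixed dilate of $Q_0$. Using the weak-$(1,1)$ bound, I select the maximal subcubes $\{Q_j\}\subset \mathscr{D}^d(Q_0)$ on which $|f|_{Q_j}$ or $\mathcal{M}_T^{\#}(f\mathbf{1}_{3Q_0})(x)$ exceeds a large multiple of $|f|_{Q_0}$; by the weak-$(1,1)$ bound, $\sum_j \omega(Q_j) \leq \tfrac12 \omega(Q_0)$ and hence the sparseness bound $\omega(Q_0 \setminus \bigcup_j Q_j) \geq \tfrac12 \omega(Q_0)$ holds. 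On $E(Q_0) := Q_0 \setminus \bigcup_j Q_j$ the value $|T(f\mathbf{1}_{3Q_0})(x)|$ is pointwise $\lesssim |f|_{Q_0}$, and iterating the procedure on each $Q_j$ produces $\mathscr{S}^d$ and the first sparse bound. For the commutator, I would use the standard decomposition
$$
[b,T]f(x) = \bigl(b(x)-b_{Q^d}\bigr)T(f\mathbf{1}_{3Q^d})(x) - T\bigl((b-b_{Q^d})f\mathbf{1}_{3Q^d}\bigr)(x) + \text{(non-local)},
$$
introduce a commutator-adapted grand maximal truncation to control the non-local term, verify its weak-type $(1,1)$ bound by the same mechanism, and run the recursive algorithm tracking the three test quantities $|f|_{Q^d}$, $|(b-b_{Q^d})f|_{Q^d}$, and the commutator truncation. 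The two local terms in the decomposition produce exactly the two summands appearing in the conclusion.

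The main obstacle is the weak-type $(1,1)$ estimate for $\mathcal{M}_T^{\#}$ and its commutator analogue. The smoothness conditions \eqref{smooth y3CZ}--\eqref{smooth x3CZ} carry the Euclidean norm $\|\cdot\|$ in the numerator, whereas the dyadic cubes in $\mathscr{D}^d$ are defined via the Dunkl metric $d$, so two points $\xi,\xi' \in Q^d$ only satisfy $d(\xi,\xi') \lesssim \operatorname{diam}_d(Q^d)$ rather than a comparable Euclidean bound. Closing this gap requires invoking the $G$-invariance of $d\omega$ and summing the kernel estimates over the orbit $\mathcal{O}(\xi')$, so that the effective distance entering the smoothness estimate reduces to $d(\xi,\xi')$. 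Once this reduction is in place, the Calder\'on--Zygmund decomposition on $(\mathbb{R}^N,d,\omega)$ yields the desired weak-$(1,1)$ bound, and the remainder of the sparse-domination iteration proceeds by essentially classical arguments.
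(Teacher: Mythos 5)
Your overall architecture (grand maximal truncation, stopping-time/Calder\'on--Zygmund iteration on a Hyt\"onen--Kairema system, then the standard commutator splitting) is the same as the paper's, but the step you yourself flag as the main obstacle is precisely where the argument breaks, and the repair you propose does not close it. The oscillation operator $\mathcal{M}_T^{\#}$ taken over Dunkl cubes cannot be controlled by the smoothness condition \eqref{smooth x3CZ}: two points $\xi,\xi'$ of the same Dunkl cube $Q^d$ satisfy only $d(\xi,\xi')\lesssim \ell(Q^d)$, while $\|\xi-\xi'\|$ can be arbitrarily large (take $\xi'$ near a reflection $\sigma(\xi)$ with $\xi$ far from the reflecting hyperplane), and \eqref{smooth x3CZ} measures regularity in the Euclidean norm. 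Your fix --- ``summing the kernel estimates over the orbit $\mathcal{O}(\xi')$ so that the effective distance reduces to $d(\xi,\xi')$'' --- does not work, because the kernel is not $G$-invariant: after replacing $\xi'$ by the orbit representative $\sigma(\xi')$ that is Euclidean-close to $\xi$ (which is the legitimate use of \eqref{smooth x3CZ}), you are left with the term $T(f\mathbf{1}_{\mathbb{R}^N\setminus 3Q^d})(\xi')-T(f\mathbf{1}_{\mathbb{R}^N\setminus 3Q^d})(\sigma(\xi'))$, in which there is no cancellation to exploit; bounding each piece by the size condition \eqref{siCZ} alone produces a sum over Dunkl annuli with no decay factor, hence no bound by $Mf$ and no weak $(1,1)$ estimate by the mechanism you describe (size $+$ smoothness $+$ $L^2$-boundedness fed into a Calder\'on--Zygmund decomposition).

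This is exactly why the paper does not use the oscillation form. Its operator $M_T$ carries an extra $\sup_{\sigma\in G}$ over absolute values at all orbit representatives, and the key Lemma \ref{lemmaMT} bounds it pointwise by $\sum_{\sigma\in G}Mf(\sigma(\cdot))+\sum_{\sigma\in G}T^{*}f(\sigma(\cdot))$ (see \eqref{eq35}), where $T^{*}$ is the maximal truncated singular integral; the non-cancellative term above is absorbed into $T^{*}$, and the weak $(1,1)$ bound for $T^{*}$ is a nontrivial Cotlar-type inequality in the Dunkl setting imported from Tan--Han--Li. Your proposal never invokes $T^{*}$ or any Cotlar inequality, so the weak-type input needed to make the exceptional set small is missing; relatedly, the local term on $E(Q_0)$ requires the weak $(1,1)$ norm of $T$ itself (as in \eqref{eqMB0}), not merely $L^2$-boundedness. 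If you replace your $\mathcal{M}_T^{\#}$ by the orbit-supped, non-oscillation operator and grant the weak $(1,1)$ bounds for $T$ and $T^{*}$, the rest of your stopping-time iteration and the commutator decomposition do proceed essentially as in the paper (modulo the bookkeeping with Dunkl balls versus dyadic cubes via the adjacent systems).
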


\begin{remark}
 Sparse dominations in \eqref{sparseT}-\eqref{sparsebT} also hold for the Riesz type Dunkl-Calder\'on-Zygmund operator and its commutator.
\end{remark}

Indeed, the method of the sparse domination was originally motivated by the study of the quantitative weighted estimates. Later, it is used to obtain two-weight norm inequalities in a number of works (see, for example \cite{AMR, DGKLWY}). As an application of Theorem \ref{sparseforT}, we study the weighted boundedness for $T$ and the two-weight boundedness for the commutator $[b,T]$.

To state our result, let us give the definition of the Dunkl-Muckenhoupt weight function on $(\mathbb{R}^N,d,d\omega)$.
  A nonnegative locally integrable function $u$ is said to belong to $ A^d_p(\mathbb{R}^N,d,d\omega), 1<p<\infty$ if
$$
[u]_{A^d_p}:=\sup _{B \subset \mathbb{R}^N}\Big(\frac{1}{\omega(\mathcal{O}(B))} \int_{\mathcal{O}(B)} u(x) d\omega(x)\Big)\Big(\frac{1}{\omega(\mathcal{O}(B))} \int_{\mathcal{O}(B)} u(x)^{-1/(p-1)} d \omega(x)\Big)^{p-1}<\infty.
$$

It should be pointed out that there are many examples for such weights. For instance, $u(x)=d(x,0)^{\gamma}$ with some $\gamma \in \mathbb{R}$ or the classic Muckenhoupt weight function on $(\mathbb{R}^N, \|\cdot\|,d\omega)$ (see \cite{BS1994} and \cite{DGKLWY}).
We also write
$u(\mathcal{O}(B))=\int_{\mathcal{O}(B)} u (x)d\omega (x)$.

Now, we introduce the spaces of weighted bounded mean oscillation in the Dunkl setting associated to Dunkl metric.
For a weight $u \in A_p^d$, we define the weighted $\mathrm{BMO}^d_u$ space by the set of the functions $b\in L_{loc}^1(\mathbb{R}^N, d \omega)$ satisfying
$$
    \|b\|_{\mathrm{BMO}^d_u}:=\sup _{B \subset \mathbb{R}^N}
    \frac{1}{u(\mathcal{O}(B))} \int_{\mathcal{O}(B)}|b(x)-b_{\mathcal{O}(B)}| d \omega(x)<\infty.
$$
If we replace the Dunkl ball $\mathcal{O}(B)$ by the Euclidean ball $B$ in the definition above, we obtain the norm $\|\cdot\|_{\mathrm{BMO}^{\text{Dunkl}}_u}$, which defines the weighted $\mathrm{BMO}_u^{\text{Dunkl}}$ space with Euclidean metric. When $u=1$, these two spaces reduce to $\mathrm{BMO}^{d}$ and $\mathrm{BMO}^{\text{Dunkl}}$ with the strict inclusion $\mathrm{BMO}^d\subsetneqq \mathrm{BMO}^{\text{Dunkl}}$ (see \cite{JL2023}).

We now present our results as follows.

\begin{theorem}\label{twoweightbound}
Suppose that $T$ is a Dunkl-Calder\'on-Zygmund operator and $u$, $v \in A^d_{p}$, $\vartheta =(\frac{u}{v})^{1/p}$ for $1<p<\infty$. Then,
\begin{enumerate}
  \item \label{weightT}
  $T$ is bounded on $L^{p}(\mathbb{R}^N,d,u d\omega)$;
  \item \label{weightbT}
  If $b \in \mathrm{BMO}^d_{\vartheta}$, then $[b,T]$ is bounded from $L^{p}(\mathbb{R}^N,d,u d\omega)$ to $L^{p}(\mathbb{R}^N,d,v d\omega)$.
\end{enumerate}
\end{theorem}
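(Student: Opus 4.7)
The plan is to deduce both statements from the pointwise sparse dominations in Theorem \ref{sparseforT}: the original operator estimates reduce to weighted bounds for the sparse operator
$\mathcal{A}_{\mathscr{S}^d}f(x)=\sum_{Q^d\in\mathscr{S}^d}|f|_{Q^d}\mathbf{1}_{Q^d}(x)$
and its commutator-type analogue. Since the Hyt\"onen-Kairema dyadic system $\mathscr{D}^d$ is adapted to the Dunkl metric $d$, each cube $Q^d\in\mathscr{D}^d$ satisfies $B(x_{Q^d},cr_{Q^d})\subset Q^d\subset\mathcal{O}(B(x_{Q^d},Cr_{Q^d}))$ for some $0<c<C$, so averages over $Q^d$ are comparable to averages over Dunkl balls, which allows the hypotheses on $u,v\in A_p^d$ and $b\in\mathrm{BMO}_\vartheta^d$ to be transferred to the dyadic setting up to a bounded ancestor-cube comparison and the doubling of $d\omega$.

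For part \eqref{weightT}, given the sparse domination $|Tf|\lesssim\mathcal{A}_{\mathscr{S}^d}f$, it suffices to show that $\mathcal{A}_{\mathscr{S}^d}$ is bounded on $L^p(\mathbb{R}^N,d,u\,d\omega)$ whenever $u\in A_p^d$. I would prove this by duality: pair $\mathcal{A}_{\mathscr{S}^d}f$ with a test function $g\in L^{p'}(u^{1-p'}d\omega)$, expand as $\sum_{Q^d\in\mathscr{S}^d}|f|_{Q^d}\int_{Q^d}g\,d\omega$, replace $Q^d$ by the major subset $E(Q^d)$ using the $\theta$-sparse property, and estimate the resulting expression by $\bigl(M_u^d f\cdot M_{u^{1-p'}}^d g\bigr)$-type averages. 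Combined with the weighted $L^p$-bounds of the Dunkl-Hardy-Littlewood maximal function $M^d$ on spaces of homogeneous type (which follow from $A_p^d$ via the standard Calder\'on-Zygmund/Christ decomposition), H\"older's inequality delivers the desired bound.

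For part \eqref{weightbT}, I would apply the commutator sparse domination and, following the Bloom-type scheme, split into
$\mathcal{C}_1 f(x)=\sum_{Q^d}|f|_{Q^d}|b(x)-b_{Q^d}|\mathbf{1}_{Q^d}(x)$ and $\mathcal{C}_2 f(x)=\sum_{Q^d}|(b-b_{Q^d})f|_{Q^d}\mathbf{1}_{Q^d}(x)$. To control $\|\mathcal{C}_i f\|_{L^p(v\,d\omega)}$ by $\|b\|_{\mathrm{BMO}_\vartheta^d}\|f\|_{L^p(u\,d\omega)}$, the key observation is that the assumption $\vartheta=(u/v)^{1/p}$ with $u,v\in A_p^d$ gives $\vartheta\in A_2^d$ (by the Dunkl analogue of the Jawerth/Bloom algebra), so the weighted John-Nirenberg inequality allows me to replace $|b-b_{Q^d}|$ by a smoother $L^r(\vartheta)$-average on a fixed dilate of $Q^d$. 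Duality then again reduces the estimate to two-weight Carleson-type testing conditions that are verified by the $A_p^d$ condition for the pair $(u,v)$, exactly as in the Euclidean commutator theorem.

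The principal obstacle is the passage between the natural ball formulation of $A_p^d$ and $\mathrm{BMO}_\vartheta^d$ (defined on $\mathcal{O}(B)$) and the dyadic formulation required to estimate the sparse operator: one must verify a dyadic reverse H\"older inequality and the boundedness of the dyadic maximal operator $M^d$ on $L^p(u\,d\omega)$ uniformly in the finite collection of Hyt\"onen-Kairema systems covering $(\mathbb{R}^N,d,d\omega)$, and show that the weighted John-Nirenberg theorem in this setting produces the right $\vartheta$-average on $Q^d$. Once this transference is made precise, the remainder of the argument is the routine sparse/Bloom calculation on a doubling metric measure space.
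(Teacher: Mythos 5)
Your treatment of part \eqref{weightT} is essentially the paper's: sparse domination plus the Moen-type duality argument with the weighted dyadic maximal function $M^d_u$ (this is exactly how the paper proceeds, via \eqref{wp} and Lemma \ref{Muweight}), and the ball-to-dyadic transference you describe is handled by \eqref{BsubsetQ}. The gap is in part \eqref{weightbT}. Your key steps --- ``$\vartheta\in A_2^d$, hence weighted John--Nirenberg, hence replace $|b-b_{Q^d}|$ by an $L^r(\vartheta)$-average, hence two-weight Carleson testing conditions verified by the $A_p^d$ condition for $(u,v)$'' --- are not established and, as stated, the last reduction is not correct: joint $A_p$-type information on the pair $(u,v)$ alone does not yield two-weight bounds for sparse operators, and nothing in your outline explains how the factor $\|b\|_{\mathrm{BMO}^d_{\vartheta}}$ and the two weights recombine. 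Moreover, the weighted John--Nirenberg inequality you invoke is genuinely problematic in this setting: its usual proof requires reverse H\"older/$A_\infty$ properties of $\vartheta$, and in the Dunkl setting the paper only proves reverse H\"older for \emph{radial} weights (the Lebesgue differentiation argument along $\mathcal{O}(B(x,r))$ as $r\to0$ recovers an average of $u$ over the orbit $\mathcal{O}(x)$, not $u(x)$). Since $\vartheta=(u/v)^{1/p}$ need not be radial, the ``principal obstacle'' you flag is not a routine transference but the missing core of the argument.

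The paper's proof of \eqref{weightbT} avoids all of this. It uses the sparse domination of the oscillation (Lemma \ref{lemmab}, from \cite{DGKLWY}): for a possibly enlarged sparse family, $|b(x)-b_{Q^d}|\lesssim \sum_{P^d\subset Q^d}\Omega$-averages on $P^d$, which immediately gives \eqref{bA}, i.e.
\begin{equation*}
\int_{Q^d}|b-b_{Q^d}||f|\,d\omega \;\lesssim\; \|b\|_{\mathrm{BMO}^d_{\vartheta}}\int_{Q^d}\mathcal{A}(|f|)\,\vartheta\,d\omega .
\end{equation*}
Then, dualizing against $g\in L^{p'}(v\,d\omega)$ and using self-adjointness of $\mathcal{A}$, both terms $\mathcal{B}_1,\mathcal{B}_2$ reduce to $\|\mathcal{A}(\mathcal{A}(|f|)\vartheta)\|_{L^p(v\,d\omega)}$, and the decisive step is purely algebraic: since $\vartheta^p v=u$,
\begin{equation*}
\|\mathcal{A}(|f|)\vartheta\|_{L^p(v\,d\omega)}=\|\mathcal{A}(|f|)\|_{L^p(u\,d\omega)}\lesssim [u]_{A_p^d}^{\max\{p'/p,1\}}\|f\|_{L^p(u\,d\omega)},
\end{equation*}
so one only ever applies the \emph{one-weight} sparse bound \eqref{sparsedifi} twice (once with $v$, once with $u$). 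No membership of $\vartheta$ in $A_2^d$, no weighted John--Nirenberg, and no reverse H\"older for $\vartheta$ are needed. To repair your argument you should either adopt this oscillation-sparse lemma route, or supply a proof of the weighted John--Nirenberg inequality for $\mathrm{BMO}^d_{\vartheta}$ with non-radial $\vartheta$ in the Dunkl setting together with a precise two-weight estimate replacing the vague testing-condition step.
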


\begin{remark}\label{BMOdunklremark}
 Assume that $T$ is a Dunkl-Riesz transform $R_j$ with $j=1,\ldots,N$. Han et al. \cite{HLLW} studied the lower and upper bounds of the commutator $[b,R_j]$ with respect to the $\mathrm{BMO}^{\text{Dunkl}}$ and $\mathrm{BMO}^d$ spaces, respectively. Motivated by this, we also have the reverse version of \eqref{weightbT} in Theorem \ref{twoweightbound} for $[b,R_j]$. Let $u$ be a radial $A^d_{p}$ weight and $v \in A^d_{p}$, $\vartheta =(u/v)^{1/p}$. We have the following result:
 $$
 \textit{If $[b,R_j]$ is bounded from $L^{p}(\mathbb{R}^N,d,u d\omega)$ to $L^{p}(\mathbb{R}^N,d,v d\omega)$, then we have $b \in \mathrm{BMO}^{\mathrm{Dunkl}}_{\vartheta}$.}
 $$
 In the case of $u=v=1$, then $\vartheta=1$. The authors in \cite{DH2024} extended the condition of the upper bounds for $[b,R_j]$ to $b\in\mathrm{BMO}^{\mathrm{Dunkl}}$, achieving the characterization of its boundedness via the space $\mathrm{BMO}^{\mathrm{Dunkl}}$ and generalizing the results in \cite{HLLW}.
However, we can not extend the two-weight boundedness condition in \eqref{weightbT} in Theorem \ref{twoweightbound} to $\mathrm{BMO}_{\vartheta}^{\mathrm{Dunkl}}$, because we obtain this via the sparse domination which is formulated in terms of Dunkl cubes.
\end{remark}

Now, we give another application of sparse dominations. Given a family of Banach function spaces $\{X_{\alpha}\}_{\alpha \in \mathcal{I}}$. There are two fundamental extrapolation methods, the intersection and the sum, which can be denoted as
$$
\Delta(\{X_{\alpha}\}_{\alpha \in \mathcal{I}})\quad \text{  and  } \quad
\Sigma(\{X_{\alpha}\}_{\alpha \in \mathcal{I}})
$$
respectively. We define a Hardy-Littlewood maximal operator $M_d$ associated to Dunkl metric on $(\mathbb{R}^N,d,d\omega)$ by
$$
M_d f(x):=\sup _{ B\ni x, B\subset \mathbb{R}^N} \frac{1}{\omega(\mathcal{O}(B))} \int_{\mathcal{O}(B)}|f(y)| \,   d \omega(y).
$$
Denote by $X'$ the dual space of $X$. Then we have the following result.
\begin{theorem}\label{Extrapolation}
Let $\{X_{\alpha}\}_{\alpha \in \mathcal{I}}$ be a family of Banach function spaces, such that $ X_{\alpha} \subset L^{1}_{loc}(\mathbb{R}^N, d, d\omega)$ for every $\alpha \in \mathcal{I}$. Let $T$ be a Dunkl-Calder\'on-Zygmund operator. If $M_d : X_{\alpha} \rightarrow X_{\alpha}$ and
$M_d : X'_{\alpha}\rightarrow X'_{\alpha}$ for every $\alpha \in \mathcal{I}$. Then, $T$ is bounded in the extrapolation space $\Delta(\{X_{\alpha}\}_{\alpha \in \mathcal{I}})$ and $
\Sigma(\{X_{\alpha}\}_{\alpha \in \mathcal{I}})$.
\end{theorem}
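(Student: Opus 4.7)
The strategy is to use the pointwise sparse domination from Theorem~\ref{sparseforT}\eqref{sparseT} to reduce the boundedness of $T$ to that of a single positive sparse operator, and then to lift the latter from each individual space $X_\alpha$ to the extrapolation spaces $\Delta$ and $\Sigma$ by standard general principles. Concretely, Theorem~\ref{sparseforT}\eqref{sparseT} gives, for compactly supported $f$,
\[
|Tf(x)|\lesssim \mathcal{A}_{\mathscr{S}^d}f(x):=\sum_{Q^d\in\mathscr{S}^d}|f|_{Q^d}\mathbf{1}_{Q^d}(x)\quad\text{a.e. }x,
\]
so it suffices to prove $\|\mathcal{A}_{\mathscr{S}^d}f\|_{X_\alpha}\lesssim \|f\|_{X_\alpha}$ for every $\alpha\in\mathcal{I}$, with an implicit constant independent of $f$ and of the sparse family.

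The main technical step is this $X_\alpha$-bound for $\mathcal{A}_{\mathscr{S}^d}$, and my plan is the classical duality argument for positive sparse operators, carried out in the abstract Banach function space setting. Using K\"othe duality between $X_\alpha$ and $X'_\alpha$, I write
\[
\|\mathcal{A}_{\mathscr{S}^d}f\|_{X_\alpha}=\sup_{\|g\|_{X'_\alpha}\le 1}\int \mathcal{A}_{\mathscr{S}^d}f(x)\,|g(x)|\,d\omega(x),
\]
and I expand the inner integral as $\sum_{Q^d}|f|_{Q^d}|g|_{Q^d}\omega(Q^d)$. Sparsity gives $\omega(Q^d)\le \theta^{-1}\omega(E(Q^d))$, and since each average is controlled by the Dunkl Hardy-Littlewood maximal function on $E(Q^d)\subset Q^d$, the finite overlap of $\{E(Q^d)\}$ yields
\[
\int \mathcal{A}_{\mathscr{S}^d}f\cdot|g|\,d\omega\lesssim \int M_df(x)\,M_dg(x)\,d\omega(x)\le \|M_df\|_{X_\alpha}\|M_dg\|_{X'_\alpha}\lesssim \|f\|_{X_\alpha}\|g\|_{X'_\alpha}
\]
after a further application of the generalized H\"older inequality in $X_\alpha$ together with the hypotheses $M_d:X_\alpha\to X_\alpha$ and $M_d:X'_\alpha\to X'_\alpha$. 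Taking the supremum over $g$ gives the desired estimate, hence $\|Tf\|_{X_\alpha}\lesssim \|f\|_{X_\alpha}$ for every $\alpha$.

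Once operator norm bounds on each $X_\alpha$ are in hand, the extension to $\Delta(\{X_\alpha\})$ and $\Sigma(\{X_\alpha\})$ is a routine argument. For the intersection, equipped with $\|f\|_\Delta=\sup_\alpha\|f\|_{X_\alpha}$, the bound $\|Tf\|_{X_\alpha}\lesssim \|f\|_{X_\alpha}$ for every $\alpha$ transfers directly. For the sum, given a representation $f=\sum_\alpha f_\alpha$ with $f_\alpha\in X_\alpha$, linearity of $T$ yields $Tf=\sum_\alpha Tf_\alpha$, and applying the individual bounds and then taking the infimum over representations gives $\|Tf\|_\Sigma\lesssim \|f\|_\Sigma$.

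The main obstacle is the bilinear sparse estimate in the second paragraph, and specifically ensuring that the average $|f|_{Q^d}$ over a dyadic cube from the Hyt\"onen-Kairema system $\mathscr{D}^d$ is comparable to the Dunkl maximal function $M_df$ (which is defined over Dunkl balls). This requires knowing that each dyadic cube is contained in a Dunkl ball of comparable $\omega$-measure, a standard feature of the dyadic construction on the doubling space $(\mathbb{R}^N,d,d\omega)$, but one that must be recorded explicitly before the rest of the argument goes through cleanly.
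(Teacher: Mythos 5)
Your core per-space estimate is correct and is a genuinely different route from the paper's. You bound the sparse operator on a single Banach function space $X_\alpha$ by the standard bilinear argument (sparsity $\omega(Q^d)\le\theta^{-1}\omega(E(Q^d))$, pointwise control of cube averages by $M_d$ on $E(Q^d)$, finite overlap, the generalized H\"older inequality for $X_\alpha$ and $X'_\alpha$, and the Lorentz--Luxemburg duality $X''_\alpha=X_\alpha$), and the comparability of dyadic averages with $M_d$ that you flag is indeed available from Lemma~\ref{dyadicsystemfit}\eqref{QsubB} together with doubling. The paper instead first converts the sparse domination into the weighted inequality \eqref{eqTM} via \eqref{wp}, and then runs a Rubio de Francia iteration $\varphi(g)=\sum_k M_d^k g/(2\|M_d\|_{X'})^k$ to produce an $A_1^d$ majorant of $g\in X'$; both routes end with $\|Tf\|_{X}\lesssim\|M_d\|_{X}\|M_d\|_{X'}\|f\|_{X}$, and yours avoids weights altogether, which is a genuine simplification at this step.

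The real divergence, and the gap in your write-up, is the passage to the extrapolation spaces. The paper never decomposes $f$: it applies its single-space estimate directly with $X=\Delta(\{X_\alpha\})$ and $X=\Sigma(\{X_\alpha\})$, for which it must know that $M_d$ is bounded on these spaces \emph{and on their K\"othe duals}; the latter is exactly the non-routine ingredient, supplied by the identifications $(\Sigma(\{X_\alpha\}))'\cong\Delta(\{X'_\alpha\})$ and \eqref{eqDelta}. In your route the $\Delta$ case does transfer, but for $\Sigma$ you assert $Tf=\sum_\alpha Tf_\alpha$ for an arbitrary representation $f=\sum_\alpha f_\alpha$; this interchange is not automatic, since the series converges only in the ambient space $Y$, $T$ is only known to be bounded on $L^2(d\omega)$ and (after your first step) on each $X_\alpha$, the individual $f_\alpha$ need not be compactly supported (so even applying the sparse domination to each term requires an extension-by-density argument), and identifying the $\Sigma$-limit of $\sum_\alpha Tf_\alpha$ with $Tf$ needs some form of convergence in measure plus a consistency argument that you do not provide. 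In addition, both your transfer and the paper's require the constants $\|M_d\|_{X_\alpha\to X_\alpha}$, $\|M_d\|_{X'_\alpha\to X'_\alpha}$ to be uniform in $\alpha$, which should be stated explicitly since the hypotheses only assert boundedness for each $\alpha$ (this caveat is shared with the paper). The cleanest repair within your framework is to keep the bilinear sparse argument but run it once on $X=\Delta(\{X_\alpha\})$ and once on $X=\Sigma(\{X_\alpha\})$, importing the K\"othe-duality identifications as the paper does, instead of summing the per-$\alpha$ bounds.
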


Since the size and the smoothness estimates for the Dunkl-Calder\'on-Zygmund operator are involving two metrics, Dunkl metric and Euclidean metric. We can not use the known sparse results directly. We instead make a slight of modification to the key lemma to get the sparse result in the Dunkl setting. To provide the weighted bounds, we first define a Dunkl-Muckenhoupt weight which makes sense in the Dunkl setting and has some basic properties the same as the classic Muckenhoupt $A_p$ weight, especially, reverse H\"{o}lder inequality.

 This following content is organized as follows. We recall some basic definitions and present some useful lemmas in Section \ref{preliminaries}. Then, we give the proof of Theorem \ref{sparseforT} and Theorem \ref{twoweightbound} in Section \ref{proofofsparseT} and Section \ref{Sec 4}, respectively.
  The proof of Theorem \ref{Extrapolation} is provided in the last section.

To simplify the notations throughout this paper,
we write $X \lesssim Y$ to indicate the existence of a constant
 $C$ such that $X \leq CY$.
 Positive constants may vary across different occurrences. If we write $X \approx Y$,
 then both $X \lesssim Y$ and $Y \lesssim X$ hold.

\section{Preliminaries}\label{preliminaries}

In this section, we first introduce some basic definitions and results in the Dunkl settings. For details we refer to \cite{Dunkl, Roesler3, Roesler-Voit}.

{\bf Dunkl measure.}
For any Euclidean ball $B(x,r)=\{y\in \mathbb{R}^N: \|x-y\|<r\}$ centred at $x\in \mathbb{R}^N$ with radius $r>0$,
the scaling property
$$
 \omega(B(tx, tr))=t^{\mathbf{N}}\omega(B(x,r))
$$
holds. The number $\mathbf{N}$ is the homogeneous dimension associated with the Dunkl setting.

Since
\begin{equation*}
\omega(B(x,r))\approx r^{N}\prod_{\upsilon\in R}(\,|\langle \upsilon,x\rangle|+r\,)^{\kappa(\upsilon)}.
\end{equation*}
The measure $\omega$ satisfies the doubling condition, that is, there is a constant $C_d>0$ such that
\begin{equation*}\label{eqdoubling}
\omega(B(x,2r))\leq C_d\omega(B(x,r)).
\end{equation*}
It implies from the doubling condition that
\begin{equation}\label{equival vollum}
\omega(B(x,r))\approx \omega(B(y,r)), \text{ if } \|x-y\|\approx r.
\end{equation}
Moreover, $\omega$ is also a reverse doubling measure. There exists a constant $C\ge1$ such that,
for every $x\in\mathbb{R}^N$ and for every $r_1\geq r_2>0$,
\begin{equation}\label{growth}
C^{-1}\Big(\frac{r_1}{r_2}\Big)^{N}\leq\frac{{\omega}(B(x,r_1))}{{\omega}(B(x,r_2))}\leq C \Big(\frac{r_1}{r_2}\Big)^{\mathbf N}.
\end{equation}

It is direct to check that $d(x,y)=d(y,x)$ and $d(x,y)\leq d(x,z)+d(z,y)$ for any $x,y,z\in \mathbb{R}^N$.
 The ball defined via the Dunkl metric $d$ is
 $$\mathcal{O}(B(x,r)):=\{y\in \mathbb{R}^N:d(y,x)<r\}=\bigcup_{\sigma \in G}B(\sigma(x),r).
 $$
 Denote by $\lambda\mathcal{O}(B(x,r))=\mathcal{O}(B(x,\lambda r))$ for any $\lambda>0$.

Since $G$ is a finite group, we have
$$
\omega(B(x,r))\leq \omega(\mathcal{O}(B(x,r)))\leq |G|\,{\omega}(B(x,r)).
$$
Combining this with \eqref{equival vollum}, we have
\begin{equation}\label{equival O vollum}
\omega(\mathcal{O}(B(x,r)))\approx \omega(\mathcal{O}(B(y,r))), \text{ if } d(x,y)\approx r.
\end{equation}

\smallskip

{\bf Dunkl operator.}
Given the reflection group $G$ of a root system $R$ and a fixed nonnegative multiplicity function $\kappa$.
$R^+$ is a positive subsystem of $R$ where the elements span a cone in the space of roots.
The Dunkl operators $T_\xi$ introduced in \cite{Dunkl} are defined by the difference operators:
\begin{align*}
T_\xi f(x)
&{=\partial_\xi f(x)+\sum_{\upsilon\in R}\frac{\kappa(\upsilon)}2\langle \upsilon,\xi\rangle
\frac{f(x)-f(\sigma_{\upsilon}(x))}{\langle \upsilon,x\rangle}}\\
&=\partial_\xi f(x)+\sum_{\upsilon\in R^+}\kappa(\upsilon)\langle \upsilon,\xi\rangle\frac{f(x)-f(\sigma_\upsilon(x) )}{\langle \upsilon,x\rangle}.
\end{align*}
$T_\xi$ are the deformations of the directional derivatives $\partial_\xi$.

Set $T_j=T_{e_j}$, where $\{e_1,\dots,e_N\}$ is the canonical basis of $\mathbb{R}^N$. Then, the Dunkl Laplacian ${\Delta}:=\sum_{j=1}^NT_{j}^2$ is the differential-difference operator, which acts on $C^2$ functions by
$$
{\Delta}f(x)
{=\Delta_{\text{eucl}}f(x)
+\sum_{\upsilon\in R}\kappa(\upsilon)\delta_\upsilon f(x)}
=\Delta_{\text{eucl}}f(x)
+2\sum_{\upsilon\in R^+}  \kappa(\upsilon)\delta_\upsilon f(x),
$$
where
$$
\delta_\upsilon f(x)
=\frac{\partial_\upsilon f(x)}{\langle \upsilon,x\rangle}-\frac{f(x)-f(\sigma_\upsilon(x))}{\langle \upsilon,x\rangle^2}
$$
and $\Delta_{\text{eucl}}=\sum_{j=1}^N\partial_{j}^2$ is the classic Laplacian on $\mathbb{R}^N$.

The operator $\Delta$ is essentially self-adjoint on $L^2(\mathbb{R}^N, d\omega)$
 (see for instance \cite[Theorem\;3.1]{AH})
and generates the heat semigroup
$$
H_tf(x)=e^{t{\Delta}}f(x)=\int_{\mathbb{R}^N}
h_t(x,y)f(y)\,   d\omega (y).
$$
Here the heat kernel $h_t(x,y)$ is a $C^\infty$ function in all variables $t>0$, $x,y\in\mathbb{R}^N$.

\smallskip

{\bf Dunkl kernel.} For fixed ${x}\in\mathbb{R}^N$, considering the simultaneous eigenfunction problem
$$
T_\xi f=\langle x,\xi\rangle\,f,\quad\forall\;\xi\in\mathbb{R}^N.
$$
its unique solution $f(y)=E(x,y)$ satisfying $f(0)=1$ is the Dunkl kernel.
The following integral formula was {obtained} by R\"osler \cite{Roesle99}\,{:}
\begin{equation}\label{Rintegral}
{E}(x,y)=\int_{\mathbb{R}^N} e^{\langle\eta, y\rangle} d\mu_{x}(\eta),
\end{equation}
where ${\mu_{x}}$ is a probability measure supported {in the convex hull} $\operatorname{conv}\mathcal{O}(x)$ of the $G$-orbit of $x$. The function ${E}(x,y)$ extends holomorphically to $\mathbb{C}^N\times\mathbb{C}^N$. Please refer to \cite{ADH} for more properties for the Dukl kernel.

\smallskip

{\bf Dunkl transform and Dunkl translation.}
The Dunkl transform is defined by
$$
    \mathcal{F}_{\kappa} f(\xi)=c_{\kappa}^{-1}\int_{\mathbb{R}^N}f(x)E(x,-i\xi) d\omega(x).
$$
Here, $ c_{\kappa}=\int_{\mathbb{R}^N}e^{-\|x\|^2/2}d\omega(x)$ and the function $E(x,y)$ on
$\mathbb{C}^N \times \mathbb{C}^N$
is the Dunkl kernel which generalizes the exponential function $e^{\langle x,y\rangle}$ in the Fourier transform $
    \hat{f}(\xi)=\int_{\mathbb{R}^N}f(x)e^{\langle x,-i\xi\rangle} dx
$.

The Dunkl transform is a topological automorphism of the Schwartz space $\mathcal{S}(\mathbb{R}^N)$. For every $f\in\mathcal{S}(\mathbb{R}^N)$ and actually for every $f\in L^1({\mathbb{R}^N, d\omega})$ such that $\mathcal{F}_{\kappa}f\in L^1({\mathbb{R}^N, d\omega})$, we have
$$
f(x)=\big(\mathcal{F}_{\kappa}\big)^2f(-x),
\quad{\forall\;x\in\mathbb{R}^N}.
$$
Moreover, the Dunkl transform extends to an isometric automorphism of $L^2({\mathbb{R}^N, d\omega})$ (see \cite{dJ, Roesler-Voit}).

 There also exists a Dunkl translation $\tau$ which is defined via Dunkl transform and serves as an analogue to the ordinary translation $\tau_x f(\cdot)=f(\cdot-x)$.

The Dunkl translation $\tau_{x}f$ of a function $f\in\mathcal{S}(\mathbb{R}^N)$ by $x\in\mathbb{R}^N$ is defined by
\begin{equation*}
\tau_{x} f(y)=c_{\kappa}^{-1} \int_{\mathbb{R}^N}{E}(i\xi,x)E(i\xi,y)\mathcal{F}_{\kappa}f(\xi)  \,d\omega(\xi).
\end{equation*}
Notice that each translation $\tau_{x}$ is a continuous linear map of $\mathcal{S}(\mathbb{R}^N)$ into itself, which extends to a contraction on $L^2({\mathbb{R}^N, d\omega})$. The Dunkl translations $\tau_{x}$ and the Dunkl operators $T_\xi$ all commute. For all $x,y\in\mathbb{R}^N$, and
$f,g\in\mathcal{S}(\mathbb{R}^N)$, $\tau_{x}$ also satisfies
\begin{itemize}
\item
$\tau_{x}f(y)=\tau_{y}f(x),$

\smallskip
\item
$\displaystyle \int_{\mathbb{R}^N}\tau_{x}f(y)g(y)\, d\omega(y)
=\int_{\mathbb{R}^N}f(y)\tau_{-x}g(y)  \, d\omega(y)$.
\end{itemize}

The following specific formula was obtained by R\"osler \cite{Roesler2003}
for the Dunkl translations of radial functions $f(x)=\tilde{f}({\|x\|})$\,:
$$
\tau_{x}f(-y)=\int_{\mathbb{R}^N}{\big(\tilde{f}\circ A\big)}(x,y,\eta)\,d\mu_{x}(\eta),{\qquad\forall\;x,y\in\mathbb{R}^N.}
$$
Here
\begin{equation*}
A(x,y,\eta)=\sqrt{{\|}x{\|}^2+{\|}y{\|}^2-2\langle y,\eta\rangle}=\sqrt{{\|}x{\|}^2-{\|}\eta{\|}^2+{\|}y-\eta{\|}^2}
\end{equation*}
and $\mu_{x}$ is the probability measure occurring in \eqref{Rintegral}, which is supported in $\operatorname{conv}\mathcal{O}(x)$.

\smallskip

{\bf Dyadic system in Dunkl setting.}
Since $d(x,y)=0 \Leftrightarrow x=y$ does not hold for Dunkl metric, $(\mathbb{R}^N, d, d\omega)$ is not a space of homogeneous type introduced in 1970s by Coifman and Weiss (see \cite{CoifWeis1971}). However, Bui in \cite{Bui} pointed out that $(\mathbb{R}^N / G, d, d \mu)$ is a space of homogeneous type, where $\mu(E)=\omega(\cup_{\mathcal{O}(x) \in E} \mathcal{O}(x))$. Therefore, there exists a family $\mathscr{D}^d:=\left\{Q_\alpha^{k, d}: k \in \mathbb{Z},~\alpha \in I_k^d\right\}$ (see \cite{HK2012}). Then, the set $\mathscr{D}^d$ is called the system of dyadic cubes in $(\mathbb{R}^N, d, d \omega)$.

 Note that the elements of the space $(\mathbb{R}^N / G, d, d \mu)$ are orbits $\mathcal{O}(\cdot)$.
Since $d(x,\mathcal{O}(x))=0$ and $d(\mathcal{O}(y),\mathcal{O}(x))=d(y,x)$ for any $x,y\in \mathbb{R}^N$, we can write:
 \begin{lemma}\label{dyadicsystemfit}
Fix $\delta \in(0, 1 )$. Then there exists a family of sets $\left\{Q_\alpha^{k,d}: k \in \mathbb{Z},~\alpha \in I_k^d\right\}$ and a set of points $\left\{x_{Q_\alpha^{k,d}}: k \in \mathbb{Z},~\alpha \in I_k^d\right\}$ and satisfying
\begin{enumerate}
  \item $d(x_{Q_\alpha^{k,d}}, x_{Q_\beta^{k,d}}) \geq \delta^k$ for all $k \in \mathbb{Z}$ and $\alpha,~\beta \in I_k^d$ with $\alpha \neq \beta$;
  \item $\min_{\alpha \in I_k^d} d(x, x_{Q_\alpha^{k,d}}) \leq \delta^k$ for all $x \in \mathbb{R}^N $;
 \item for any $k \in \mathbb{Z}, \bigcup_{\alpha \in I_k^d} Q_\alpha^{k,d}=\mathbb{R}^N$ and $\{Q_\alpha^{k,d}: \alpha \in I_k^d\}$ is disjoint;
  \item if $k, l \in \mathbb{Z}$ and $k \geq l$, then either $Q_\alpha^{k,d} \subset Q_\beta^{l,d}$ or $Q_\alpha^{k,d} \cap Q_\beta^{l,d}=\emptyset$ for every $\alpha \in I_k^d$ and $\beta \in I_{l}^d$;
  \item \label{QsubB}
  for any $k \in \mathbb{Z}$ and $\alpha \in I_k^d, \mathcal{O}(B(x_{Q_\alpha^{k,d}}, \delta^k / 6)) \subset Q_\alpha^{k,d} \subset \mathcal{O}(B(x_{Q_\alpha^{k,d}}, 2 \delta^k))$.
\end{enumerate}
\end{lemma}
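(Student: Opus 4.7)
The plan is to transfer the Hytönen--Kairema dyadic construction from the quotient space to $\mathbb{R}^{N}$. The starting observation, already recalled in the text, is that $(\mathbb{R}^{N}/G,d,d\mu)$ is a genuine space of homogeneous type (Bui \cite{Bui}). The general result in \cite{HK2012} then produces, for any fixed $\delta\in(0,1)$ sufficiently small (the structural constants of $d\mu$ being absorbed into $\delta$), a system of dyadic ``cubes'' $\{\widetilde{Q}_{\alpha}^{k,d}:k\in\mathbb{Z},\ \alpha\in I_{k}^{d}\}$ on $\mathbb{R}^{N}/G$, together with reference orbits $\mathcal{O}(x_{Q_{\alpha}^{k,d}})$, satisfying the standard five properties with respect to $d$ and $\mu$.

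First I would lift this system to $\mathbb{R}^{N}$. Let $\pi:\mathbb{R}^{N}\to \mathbb{R}^{N}/G$ be the canonical projection. Define
$$
Q_{\alpha}^{k,d}:=\pi^{-1}\bigl(\widetilde{Q}_{\alpha}^{k,d}\bigr)=\bigcup_{\mathcal{O}(y)\in \widetilde{Q}_{\alpha}^{k,d}}\mathcal{O}(y),
$$
and pick any representative $x_{Q_{\alpha}^{k,d}}\in \mathbb{R}^{N}$ of the chosen reference orbit. The two identities that drive every subsequent verification are $d(\pi(x),\pi(y))=d(x,y)$, which holds by the very definition of the Dunkl metric, and $\pi^{-1}\bigl(B_{\mathbb{R}^{N}/G}(\mathcal{O}(x),r)\bigr)=\mathcal{O}(B(x,r))$, which holds because the condition $d(\mathcal{O}(y),\mathcal{O}(x))<r$ is the same as $y\in\mathcal{O}(B(x,r))$.

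Next I would verify (1)--(5). Property (1) follows from $d(x_{Q_{\alpha}^{k,d}},x_{Q_{\beta}^{k,d}})=d(\mathcal{O}(x_{Q_{\alpha}^{k,d}}),\mathcal{O}(x_{Q_{\beta}^{k,d}}))\ge \delta^{k}$. Property (2) is $\min_{\alpha} d(x,x_{Q_{\alpha}^{k,d}})=\min_{\alpha} d(\mathcal{O}(x),\mathcal{O}(x_{Q_{\alpha}^{k,d}}))\le \delta^{k}$. Properties (3) and (4) transfer verbatim since $G$-saturated sets in $\mathbb{R}^{N}$ correspond bijectively under $\pi^{-1}$ to sets in $\mathbb{R}^{N}/G$, and $\pi^{-1}$ preserves both unions/disjointness and containments. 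Property (5) is immediate from the corresponding sandwich $B_{\mathbb{R}^{N}/G}(\mathcal{O}(x_{Q_{\alpha}^{k,d}}),\delta^{k}/6)\subset \widetilde{Q}_{\alpha}^{k,d}\subset B_{\mathbb{R}^{N}/G}(\mathcal{O}(x_{Q_{\alpha}^{k,d}}),2\delta^{k})$ together with the ball-preimage identity above.

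I do not expect any genuine analytic obstacle: the argument is entirely formal once one has the two identities relating $\pi$, $d$ and balls. The only point to be careful about is that the choice of representative $x_{Q_{\alpha}^{k,d}}$ of its $G$-orbit is irrelevant, precisely because the Dunkl metric is defined as a minimum over $G$, so the ball $\mathcal{O}(B(x_{Q_{\alpha}^{k,d}},r))$ and the Dunkl distances involving $x_{Q_{\alpha}^{k,d}}$ do not depend on that choice. Thus the whole proof reduces to combining \cite{Bui} with \cite{HK2012} and unwinding the definitions; no new estimate on $d\omega$ is required beyond the doubling/quasi-invariance already recorded in the Preliminaries.
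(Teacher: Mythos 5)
Your proposal is correct and follows essentially the same route as the paper: the paper also obtains the lemma by invoking Bui's observation that $(\mathbb{R}^N/G,d,d\mu)$ is a space of homogeneous type, applying the Hyt\"onen--Kairema construction there, and transferring it back to $\mathbb{R}^N$ via the identities $d(\mathcal{O}(y),\mathcal{O}(x))=d(y,x)$ and the correspondence between quotient balls and Dunkl balls $\mathcal{O}(B(x,r))$ (you merely spell out the lifting by $\pi^{-1}$ more explicitly than the paper does). Your caveat that $\delta$ must be sufficiently small (with the resulting inner/outer ball constants such as $\delta^k/6$ and $2\delta^k$ coming from the Hyt\"onen--Kairema constants for a genuine metric) is exactly the right reading of the statement.
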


For each $k \in \mathbb{Z}$, we denote $\mathscr{D}_k^d=\{Q_\alpha^{k, d}: \alpha \in I_k^d\}$. For each $k \in \mathbb{Z}$ and $\alpha \in I_k^d$, we denote $\ell(Q_\alpha^{k, d})=\delta^k$ and
$\lambda B(Q_\alpha^k):=\mathcal{O}(B(x_{Q_\alpha^{k, d}}, 2\lambda \delta^k)), \lambda>0$.

Now, let us recall the definition of adjacent systems of dyadic cubes.

\begin{definition} (\cite{DGKLWY, HK2012})  \label{defi adjacent dyadic cube}
On $(\mathbb{R}^N , d, d \omega)$, a finite collection $\{\mathscr{D}^{d,t}: t=1,2, \ldots, C_{\mathbf{N}}\}$ of the dyadic families is called a collection of adjacent systems of dyadic cubes with parameters $\delta \in(0,1)$ and $C_0\geq1$ if it has the following properties: individually, each $\mathscr{D}^{d,t}$ is a system of dyadic cubes with parameters $\delta\in(0,1/24)$; collectively, for each ball $\mathcal{O}(B(x, r)) \subseteq  \mathbb{R}^N $ with $\delta^{k+3}<r \leq \delta^{k+2}, k \in \mathbb{Z}$, there exist $t \in\{1,2, \ldots, C_{\mathbf{N}}\}$ and $Q^{k,d,t} \in \mathscr{D}^{d,t}$ with $\ell(Q^{k,d,t})=\delta^k$ and center point $ x_{Q^{k,d,t}}$ such that $d (x, x_{Q^{k,d,t}} )<2 \delta^k$ and
\begin{align}\label{BsubsetQ}
\mathcal{O}(B(x, r)) \subseteq Q^{k,d,t} \subseteq \mathcal{O}(B(x, C_0r)).
\end{align}
\end{definition}
Here, $C_0$ only depends on $\delta$.
We refer to \cite{HK2012} for the constructions of the adjacent systems of dyadic cubes. From the analysis in \cite{KLPW}, one can see that the number of the adjacent systems of dyadic cubes $C_{\mathbf{N}}$ in Definition \ref{defi adjacent dyadic cube} is finite and depends on $\mathbf{N}$, which can be omitted in the following calculation.
\begin{remark}
\label{BtoQ}
By \eqref{BsubsetQ}, we can see that the definitions of $[u]_{A^d_{p}}$, $\|b\|_{\mathrm{BMO}^d_u}$ and $M_d $ defined via the Dunkl balls $\mathcal{O}(B)$ are equivalent to those defined in terms of the Dunkl cubes $Q^d\in \mathscr{D}^d$.

Moreover, for $k\geq 0$ and for $Q^{k+1,d} \subsetneqq Q^{k,d}$ where $Q^{k+1,d}$ denotes the largest dyadic subcube in $\mathscr{D}(Q^{k,d})$. Notice that $d(x_{Q^{k,d}}, x_{Q^{k+1,d}}) \approx 2\ell(Q^{k,d})$, by using \eqref{equival O vollum} and \eqref{QsubB} in Lemma \ref{dyadicsystemfit}, we have the doubling measure condition
\begin{equation}
\label{doublingQ}
\begin{aligned}
\omega(Q^{k,d})\leq \tilde{C}_{d}\omega(Q^{k+1,d}).
 \end{aligned}
 \end{equation}
Here, $\tilde{C}_{d}$ depends on $C_{d}$.
\end{remark}

\begin{definition} (\cite{DGKLWY, HK2012})
 Given $0<\theta<1$, a collection $\mathscr{S}^d \subset \mathscr{D}^d$ of dyadic cubes is said to be $\theta$-sparse family provided that for every $Q^d \in \mathscr{S}^d$, if there is a measurable subset $E(Q^d) \subset Q^d$ such that $\omega(E(Q^d)) \geq \theta \omega(Q^d)$ and the sets $\{E(Q^d)\}_{Q^d \in \mathscr{S}^d}$ have only finite overlap. That is, there exists a constant $C \geq 1$ such that $\sum_{Q^d} \mathbf{1}_{E(Q^d)}(x) \leq C$ for all $x \in \mathbb{R}^{N}$.
\end{definition}

\smallskip

{\bf Dunkl-Muckenhoupt weight function.}
 On $(\mathbb{R}^N, d, d \omega)$, a function $u\geq0$ is said to be an $A^d_1$ weight if
$$
M_du(x) \leq C_{A^d_1}u(x)
$$
for almost every $x \in (\mathbb{R}^N,d,d\omega)$.
We denote by
$$
[u]_{A^d_1}:=\operatorname*{ess sup}_{x \in \mathbb{R}^N} \frac{M_d u(x)}{u(x)}.
$$

Now, we have the following useful properties for the weight function in the Dunkl setting.
\begin{proposition}
 There exist the following properties for the weight function:
\begin{enumerate}
\item
 Let $1<p<\infty$. If $u \in A^d_p$ then
\begin{align}\label{wp}
\left(\frac{\omega(E)}{\omega(Q^d)}\right)^p \leq[u]_{A^d_p} \frac{u(E)}{u(Q^d)}
\end{align}
for any measurable subset $E=\{\mathcal{O}(x)\}_{x} \subset Q^d \in \mathscr{D}^d$.

\item For $1 \leq p<q<\infty$, we have
\begin{align}\label{wpq}
A^d_p \subset A^d_q \text{ with } [u]_{A^d_q}\leq  [u]_{A^d_p}.
\end{align}

\item (Reverse H\"{o}lder Inequality) On $(\mathbb{R}^N, d, d \omega)$, let $u$ be a radial weight, there exist $0<C,\gamma <\infty $ such that for every cube $Q^d$ in $\mathbb{R}^N$, we have
\begin{align}\label{reverseHolder}
\left(\frac{1}{\omega(Q^d)}\int_{Q^d}u(x)^{1+\gamma}d\omega(x)\right)^{1/(1+\gamma)}
\leq \frac{C}{\omega(Q^d)}\int_{Q^d}u(x)d\omega(x).
\end{align}
\end{enumerate}
\end{proposition}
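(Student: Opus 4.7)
Parts (1) and (2) are direct adaptations of the classical Muckenhoupt arguments to the dyadic system on $(\mathbb{R}^N,d,d\omega)$; the real content lies in (3). For (1), I would apply H\"older's inequality to $\omega(E)=\int_E u^{1/p}\,u^{-1/p}\,d\omega$ with conjugate exponents $p$ and $p/(p-1)$, obtaining
$$
\omega(E)^p\leq u(E)\left(\int_E u^{-1/(p-1)}\,d\omega\right)^{p-1}\leq u(E)\left(\int_{Q^d} u^{-1/(p-1)}\,d\omega\right)^{p-1}.
$$
Using Remark \ref{BtoQ} to recast the $A_p^d$ condition in terms of dyadic Dunkl cubes, I would bound the last factor by $[u]_{A_p^d}\omega(Q^d)^p/u(Q^d)$, which is exactly \eqref{wp}.

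For (2), writing $u^{-1/(q-1)}=(u^{-1/(p-1)})^{(p-1)/(q-1)}$ with exponent strictly less than $1$ (since $p<q$) and applying Jensen's inequality yields
$$
\left(\frac{1}{\omega(\mathcal{O}(B))}\int_{\mathcal{O}(B)} u^{-1/(q-1)}\,d\omega\right)^{q-1}\leq\left(\frac{1}{\omega(\mathcal{O}(B))}\int_{\mathcal{O}(B)} u^{-1/(p-1)}\,d\omega\right)^{p-1}.
$$
Multiplying through by $\omega(\mathcal{O}(B))^{-1}\int_{\mathcal{O}(B)} u\,d\omega$ and taking the supremum over $B$ gives $[u]_{A_q^d}\leq[u]_{A_p^d}$, hence the inclusion $A_p^d\subset A_q^d$.

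For (3), the main obstacle is that $(\mathbb{R}^N,d,d\omega)$ itself is not a space of homogeneous type, so the usual reverse-H\"older machinery does not transfer verbatim. The key observation I would exploit is that each $\sigma\in G$ is an orthogonal transformation, so every radial function on $\mathbb{R}^N$ is automatically $G$-invariant, and the Dunkl balls $\mathcal{O}(B)$ (hence the dyadic cubes $Q^d$ built from them) are $G$-saturated. A radial $A_p^d$-weight $u$ therefore descends to a weight $\tilde u$ on the quotient $(\mathbb{R}^N/G,d,d\mu)$, which by Bui \cite{Bui} is a genuine space of homogeneous type, with all averages and $A_p$ constants preserved under the descent. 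Invoking the classical reverse H\"older inequality for $A_p$ weights on spaces of homogeneous type applied to $\tilde u$ produces constants $C,\gamma>0$ satisfying the desired estimate at the quotient level, and pulling back via the $G$-invariance of both $u$ and $Q^d$ yields \eqref{reverseHolder}. The radial hypothesis is essential to this strategy, since the quotient argument fails without $G$-invariance of $u$.
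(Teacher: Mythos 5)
Your proof of (1) and (2) is the same classical H\"older/Jensen argument that the paper imports from \cite{GrafakosCla2014} (with the same harmless imprecision about passing from Dunkl balls to Dunkl cubes via Remark \ref{BtoQ}; note also that (2) is stated for $1\le p<q$, so the endpoint $p=1$ needs the one-line $A_1^d$ variant, $\frac{1}{\omega(\mathcal O(B))}\int_{\mathcal O(B)}u^{-1/(q-1)}d\omega\le(\operatorname*{ess\,inf}_{\mathcal O(B)}u)^{-1/(q-1)}$, rather than the Jensen exponent $(p-1)/(q-1)$). Part (3) is where you genuinely diverge from the paper. The paper reruns the Euclidean Calder\'on--Zygmund-decomposition proof of the reverse H\"older inequality directly on the Dunkl cubes $Q^d$ (using the dyadic doubling \eqref{doublingQ}), and identifies the only missing ingredient as a Lebesgue-differentiation statement for Dunkl-cube averages, which it verifies by sandwiching $Q^d$ between Euclidean balls $B(\sigma(x),\ell(Q^d)/6)$ and $B(\sigma(x),2\ell(Q^d))$, invoking Lebesgue differentiation on the homogeneous space $(\mathbb{R}^N,\|\cdot\|,d\omega)$, and using radiality to identify $\sum_{\sigma\in G}u(\sigma(x))$ with $|G|\,u(x)$. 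You instead use radiality only to conclude $G$-invariance, descend to the quotient $(\mathbb{R}^N/G,d,d\mu)$, which is a genuine space of homogeneous type by \cite{Bui}, and quote the reverse H\"older inequality for Muckenhoupt weights there; since Dunkl balls and the cubes of \cite{HK2012} are $G$-saturated, all averages and $A_p$ constants transfer verbatim. This is legitimate and more modular (Lebesgue differentiation is absorbed into the quotient SHT theory rather than checked by hand), but two points should be made explicit: first, as in the paper's statement, the hypothesis has to be that $u$ is a radial $A_p^d$ weight, not merely radial, since no reverse H\"older inequality holds otherwise; second, on general spaces of homogeneous type the ball-form reverse H\"older inequality is typically available only in a weak form with a dilated ball on the right-hand side, so you should either cite the dyadic-cube version (which holds in the strong form for Hyt\"onen--Kairema cubes by precisely the dyadic Calder\'on--Zygmund argument the paper runs) or pass from the dilated-ball form to the cube form using the doubling of $u\,d\omega$ supplied by \eqref{wp}. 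With these caveats your route is correct and shorter, while the paper's proof is self-contained and makes visible exactly where the radiality of $u$ is used.
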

\begin{proof}
By the inequality on Page 505 of \cite{GrafakosCla2014} used to prove $(8)$ in Proposition 7.1.5, let $f=\mathbf{1}_{E}$, we can prove \eqref{wp}. Similar to the proof of \cite[Proposition 7.1.5]{GrafakosCla2014}, by Remark \ref{BtoQ}, we can directly obtain \eqref{wpq}.

 As for \eqref{reverseHolder}, note that there exists a dyadic system $\mathscr{D}^d$ on $(\mathbb{R}^N, d, d \omega)$. For any
 Dunkl cube $Q^d\in\mathscr{D}^d$ centered at $x$. Since $d\omega$ is a doubling measure with the constant $C_d$, from the proof of \cite[Corollary 7.2.4]{GrafakosCla2014}, we only need to prove the weight $u$ satisfies the Lebesgue differential theorem. In fact, if we apply the Calder\'{o}n-Zygmund decomposition to the function $u$ on $Q^d$ at height $\alpha$. We can decompose $Q^d$ into cubes $\{Q^d_j\}_j$ which satisfies
$$\text{For almost all } x\in Q^{d}\backslash(\cup_j Q^{d}_j), \text{we have } u(x) \leq \alpha,$$
which follows from the Lebesgue differential theorem.

 First, since $Q^d \in \mathscr{D}^d$, we have $(1/12)B(Q^d)\subset Q^d \subset B(Q^d)$ where $B(Q^d)=\mathcal{O}(B(x,2\ell(Q^d)))$. Then,
 denote by
 $$\frac{1}{\omega(Q^d)}\int_{Q^d}u(y)d\omega(y)=:U(x)$$
 we have
 $$
\frac{1}{\omega(B(Q^d))}\int_{1/12B(Q^d)}u(y)d\omega(y)
\leq
U(x)\leq \frac{1}{\omega(1/12B(Q^d))}\int_{B(Q^d)}u(y)d\omega(y).
 $$
We can deduce from $B(x,1/6\ell(Q^d))\subset 1/12 B(Q^d)$ that
\begin{align*}
&\frac{1}{|G|\omega(B(x,2\ell(Q^d)))}\int_{B(x,1/6\ell(Q^d))}u(y)d\omega(y)\\
\leq &
U(x)
\leq  \sum_{\sigma \in G}\frac{1}{\omega(B(\sigma(x),1/6\ell(Q^d)))}\int_{B(\sigma(x),2\ell(Q^d))}u(y)d\omega(y).
\end{align*}
Let $\ell(Q^d) \to 0$, by the Lebesgue differential theorem on $(\mathbb{R}^N, \|\cdot\|, d \omega)$, we have
$$
\frac{1}{|G|}u(x)
\leq
U(x)
\leq  \sum_{\sigma \in G}u(\sigma(x)).
$$
 Since $u$ is a radial weight, we obtain
$$
U(x)\approx u(x)
$$
as $\ell(Q^d) \to 0$. This completes the proof.
\end{proof}

{\bf Maximal function.}
The Hardy-Littlewood maximal function $M$ in the Dunkl setting is defined as
$$
M f(x)=\sup _{B\ni x, B\subset \mathbb{R}^N} \frac{1}{\omega(B)} \int_B|f(y)| \,   d \omega(y).
$$
Note that $(\mathbb{R}^N, \|\cdot\|, d\omega)$ is a space of homogeneous type.
Then, $M$ is $L^p$ bounded for $1<p<\infty$ and is of weak $(1,1)$ type bounded (see \cite{St2}).

Suppose that $T$ is a Dunkl-Calder\'on-Zygmund operator, the maximal operator of $T$ is defined by
\begin{align*}
T^{*}f(x)= \sup_{\varepsilon>0}|T_{\varepsilon}f(x)|
\end{align*}
where
\begin{align*}
T_{\varepsilon}f(x)=
\int_{d(x,y)>\varepsilon}K(x,y)f(y)d\omega(y).
\end{align*}
The authors of \cite{TanHanLi2023} proved that $T^{*}$ is bounded on $L^p(\mathbb{R}^N, d\omega)$ for $1<p<\infty$ and is of weak type $(1,1)$ (see \cite[Corollary 3.2]{TanHanLi2023}).

For $u \in A^d_{p}$, $1<p<\infty$, we define the weighted dyadic Hardy-Littlewood maximal function $M^d_{u}$ by
$$
M^d_u f (x)= \sup_{Q^d \in \mathscr{D}^d}  \frac{1}{u(Q^d)} \int_{Q^d}|f(y)|u(y) d \omega(y) \mathbf{1}_{Q^d}(x).
$$

Then, we have the following result.
 \begin{lemma}\label{Muweight}
 For $1<p<\infty$, $u\in A^d_p$, then $M^d_u$ is bounded on $L^p(\mathbb{R}^N,d, ud\omega)$.
 \end{lemma}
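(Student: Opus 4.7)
The plan is to observe that $M^d_u$ is precisely the dyadic Hardy–Littlewood maximal operator associated with the measure $d\mu := u\, d\omega$ and the dyadic system $\mathscr{D}^d$, and then to apply the classical martingale/dyadic argument which works for an arbitrary (sigma-finite) measure and does not require any doubling property. The hypothesis $u\in A^d_p$ is invoked only to guarantee $0<u(Q^d)<\infty$ for every nontrivial dyadic cube, so that the averages appearing in the definition of $M^d_u$ are well defined.

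First I would establish the weak-type $(1,1)$ inequality with respect to $u\,d\omega$, namely
$$u\bigl(\{x\in\mathbb{R}^N : M^d_u f(x)>\lambda\}\bigr) \;\leq\; \frac{1}{\lambda}\int_{\mathbb{R}^N}|f(y)|\,u(y)\,d\omega(y).$$
For this, fix $\lambda>0$ and let $E_\lambda=\{M^d_u f>\lambda\}$. For every $x\in E_\lambda$, there exists a dyadic cube $Q^d\in\mathscr{D}^d$ with $x\in Q^d$ and $\frac{1}{u(Q^d)}\int_{Q^d}|f|\,u\,d\omega>\lambda$. Using the nesting property (4) in Lemma \ref{dyadicsystemfit} and the fact that each $x$ lies in at most one cube of each generation, one may select, for each such $x$, the maximal dyadic cube $Q^d_x$ with this property (maximality makes sense because $\frac{1}{u(Q^d)}\int_{Q^d}|f|u\,d\omega\leq \|f\|_{L^1(ud\omega)}/u(Q^d)\to 0$ as $Q^d$ grows, so the selection terminates). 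The resulting collection of maximal cubes $\{Q^d_j\}$ is pairwise disjoint, covers $E_\lambda$, and satisfies $u(Q^d_j)<\lambda^{-1}\int_{Q^d_j}|f|\,u\,d\omega$. Summing over $j$ gives the claimed weak-type bound.

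Next, the $L^\infty(u\,d\omega)\to L^\infty(u\,d\omega)$ bound is immediate with constant $1$, since each average satisfies
$$\frac{1}{u(Q^d)}\int_{Q^d}|f|\,u\,d\omega \;\leq\; \|f\|_{L^\infty(ud\omega)}.$$
Applying the Marcinkiewicz interpolation theorem to the sublinear operator $M^d_u$ on the measure space $(\mathbb{R}^N,d\mu)$ between the weak-$(1,1)$ endpoint and the $L^\infty$ endpoint yields the strong $L^p(u\,d\omega)$-boundedness for every $1<p<\infty$, which is the desired conclusion.

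The proof is essentially routine once one recognizes $M^d_u$ as a dyadic maximal operator with respect to $u\,d\omega$; the only mild subtlety is the justification of the selection of maximal cubes, which requires merely that $u$ is locally integrable and that $u(Q^d)>0$ for the cubes of interest (both of which are free from $u\in A^d_p$). I do not anticipate a serious obstacle; the argument uses no Dunkl-specific structure beyond the existence of the dyadic system $\mathscr{D}^d$ from Lemma \ref{dyadicsystemfit}.
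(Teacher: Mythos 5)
Your proposal is correct in outline but takes a genuinely different route from the paper. The paper first combines \eqref{wp} with the dyadic doubling \eqref{doublingQ} of $\omega$ to show that $u\,d\omega$ is doubling on dyadic cubes (this is \eqref{boublingforu}), and then runs the classical Calder\'{o}n--Zygmund argument for weighted maximal functions from \cite{LuDingYan}, with the H\"older estimate \eqref{equQd} as the key computation; the resulting constant depends on $[u]_{A^d_p}$ and $\tilde{C}_{d}$. You instead view $M^d_u$ as the dyadic (martingale) maximal operator of the measure $u\,d\omega$, prove the weak $(1,1)$ bound by selecting maximal stopping cubes, and interpolate with the trivial $L^\infty$ bound. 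This uses no doubling whatsoever, works for any weight with $0<u(Q^d)<\infty$, and gives a constant depending only on $p$ (independent of $u$), which is in fact the form of the bound that is exploited later, via Moen's argument, in \eqref{sparsedifi}. So your approach is more elementary and slightly more general than the paper's.

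One point does need repair: your justification of the existence of maximal selected cubes. Local integrability and positivity of $u(Q^d)$ do \emph{not} imply that $\|f\|_{L^1(u\,d\omega)}/u(Q^d)\to 0$ along the increasing tower of dyadic ancestors of a point; that requires the $u$-measure of the tower to be infinite, which can fail for a general positive locally integrable weight. Under the actual hypothesis $u\in A^d_p$ it does hold: each ancestor $Q^{k,d}$ contains a ball of radius $\delta^{k}/6$ by \eqref{QsubB}, so $\omega(Q^{k,d})\to\infty$ as $k\to-\infty$ by \eqref{growth} (and the $G$-invariance of $\omega$), and then \eqref{wp} applied with a fixed cube $E$ inside the tower gives
$$
u(Q^{k,d})\;\ge\;[u]_{A^d_p}^{-1}\,u(E)\Big(\frac{\omega(Q^{k,d})}{\omega(E)}\Big)^{p}\;\longrightarrow\;\infty .
$$
Alternatively, and more in the spirit of your doubling-free argument, run the selection only over cubes of generation at least $-n$ (where maximal cubes trivially exist), obtain the weak $(1,1)$ bound uniformly in $n$, and let $n\to\infty$ by monotone convergence. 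With either fix your proof is complete.
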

\begin{proof}
On $(\mathbb{R}^N,d,d\omega)$, for any weight function $u\in A^d_p$, by using \eqref{wp} and  \eqref{doublingQ}, we can see for any $Q^{k+1,d} \subset Q^{k,d}$ with $Q^{k+1,d}\in \mathscr{D}^d(Q^{k,d})$, $k\geq 0$,
\begin{equation}\label{boublingforu}
\begin{aligned}
u(Q^{k,d})\leq &[u]_{A_p^d} \left(\frac{\omega(Q^{k+1,d})}{\omega(Q^{k,d})}\right)^p u(Q^{k+1,d})
\leq  C[u]_{A_p^d} (\tilde{C}_{d})^{-p} u(Q^{k,d}) =: C_{u} u(Q^{k,d}).
\end{aligned}
\end{equation}
Then, we use the Calder\'on-Zygmund decomposition to the function $|f|$ on $\mathbb{R}^N$ at the height $\alpha/C_{u}$. We can get a set of disjoint dyadic cubes $\{Q^{d}\} \subset \mathscr{D}^d$ such that for each $Q^d$, we have
$$
 \frac{1}{u(Q^d)} \int_{Q^d}|f(y)|u(y) d \omega(y) > \frac{\alpha}{C_u}.
$$
Notice that
\begin{equation}\label{equQd}
\begin{aligned}
&\int_{Q^d}u(x) d\omega(x) \left(\frac{1}{u(Q^d)}\int_{Q^d}|f(x)|u(x)d\omega(x)\right)^p\\
\leq & u(Q^d) \frac{1}{u(Q^d)^p}\left(\int_{Q^d}|f(x)|^pu(x)d\omega(x)\right)
\left(\int_{Q^d}u(x)d\omega(x)\right)^{p-1}\\
= &\int_{Q^d}|f(x)|^pu(x)d\omega(x).
\end{aligned}
\end{equation}
Keeping \eqref{boublingforu} and \eqref{equQd} in mind and following the standard argument of the proof for \cite[Theorem 1.4.3]{LuDingYan}, the lemma can be proved.
\end{proof}
\medskip

\section{Proof of Theorem \ref{sparseforT}}\label{proofofsparseT}
Let $$\tilde{C}_{0} := 4(\lfloor 2C_0 \rfloor + 1),$$ where $C_0$ is the constant appearing in \eqref{BsubsetQ}. Here, $\lfloor t \rfloor$ denotes the greatest integer less than or equal to $t \in \mathbb R$.
 In the Dunkl setting, let us define the grand maximal truncated operator on $\mathbb{R}^N$ by
\begin{align*}
M_{T} f(x):=
\sup_{\substack{\mathcal{O}(B) \supset \mathcal{O}(x)\\ \mathcal{O}(B) \subset \mathbb{R}^N}}  \operatorname*{ess\,sup}_{\mathcal{O}(\xi) \subset \mathcal{O}(B)}
\sup_{\sigma \in G}
|T(f \mathbf{1}_{\mathbb{R}^N \backslash \tilde{C}_{0}  \mathcal{O}(B)})(\sigma(\xi))|.
\end{align*}
For any fixed $\mathcal{O}(B_0) \subset \mathbb{R}^N$, the locally grand maximal truncated operator on $\mathbb{R}^N$ by
\begin{align*}
M_{T,\mathcal{O}(B_0)} f(x):=
\sup_{\substack{\mathcal{O}(B) \supset \mathcal{O}(x)\\ \mathcal{O}(B) \subset \mathcal{O}(B_0)}} \operatorname*{ess\,sup}_{\mathcal{O}(\xi) \subset \mathcal{O}(B)}
\sup_{\sigma \in G}
|T(f \mathbf{1}_{\tilde{C}_{0} \mathcal{O}(B_0) \backslash \tilde{C}_{0}  \mathcal{O}(B)})(\sigma(\xi))|.
\end{align*}

Note that for any Dunkl ball $\mathcal{O}(B) \subset \mathbb{R}^N$ and any point $x \in \mathcal{O}(B)$, $x \in \mathcal{O}(B) \Leftrightarrow \mathcal{O}(x) \subset \mathcal{O}(B)$.
Observe that for any $\mathcal{O}(B) \subset \mathbb{R}^N$,
\begin{equation}\label{MTB0<MT}
\begin{aligned}
M_{T,\mathcal{O}(B_0)} f(x)\leq &
\sup_{\substack{\mathcal{O}(B) \supset \mathcal{O}(x)\\ \mathcal{O}(B) \subset \mathbb{R}^N}}
 \operatorname*{ess\,sup}_{\mathcal{O}(\xi) \subset \mathcal{O}(B)}
\sup_{\sigma \in G}
|T(f \mathbf{1}_{\tilde{C}_{0} \mathcal{O}(B_0)}\mathbf{1}_{\mathbb{R}^N \backslash \tilde{C}_{0} \mathcal{O}(B)})(\sigma(\xi))|\\
=&M_{T} (f\mathbf{1}_{\tilde{C}_{0}  \mathcal{O}(B_0)})(x).
\end{aligned}
\end{equation}
Next, we have the following lemma.
\begin{lemma}\label{lemmaMT}
For a.e. $x \in \mathcal{O}(B_0)$, we have
\begin{align}\label{eqMB0}
 |T(f \mathbf{1}_{\tilde{C}_{0}  \mathcal{O}(B_0) })(x)|
\lesssim  |G|\|T\|_{L^1 \rightarrow L^{1, \infty}} \sum_{\sigma \in G}|f(\sigma(x))|
       + M_{T, \mathcal{O}(B_0)} f(x).
\end{align}
and
\begin{align}\label{eq35}
M_{T} f(x)
\lesssim \sum_{\sigma \in G}|Mf(\sigma(x))| + \sum_{\sigma \in G}|T^{*}f(\sigma(x))|.
\end{align}
\end{lemma}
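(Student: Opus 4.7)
For \eqref{eqMB0} I adapt Lerner's median-based argument to the Dunkl setting. Fix $x \in \mathcal{O}(B_0)$ at which the Euclidean Lebesgue differentiation theorem holds for $|f|$ at every orbit point $\sigma(x)$, $\sigma \in G$, and at which Fujii's median-convergence theorem holds for $T(f\mathbf{1}_{\tilde{C}_0 \mathcal{O}(B_0)})$; these conditions hold almost everywhere. For shrinking Dunkl balls $\mathcal{O}(B_k) := \mathcal{O}(B(x, r_k)) \subset \mathcal{O}(B_0)$ with $r_k \downarrow 0$, I split $f \mathbf{1}_{\tilde{C}_0 \mathcal{O}(B_0)} = f \mathbf{1}_{\tilde{C}_0 \mathcal{O}(B_k)} + f \mathbf{1}_{\tilde{C}_0 \mathcal{O}(B_0) \setminus \tilde{C}_0 \mathcal{O}(B_k)}$ and bound the median of $|T(f \mathbf{1}_{\tilde{C}_0 \mathcal{O}(B_0)})|$ on $\mathcal{O}(B_k)$ by the sum of the two individual medians. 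The weak-type $(1,1)$ boundedness of $T$ together with Chebyshev's inequality yields a median for the first summand of order $\|T\|_{L^1 \to L^{1, \infty}} \|f \mathbf{1}_{\tilde{C}_0 \mathcal{O}(B_k)}\|_{L^1}/\omega(\mathcal{O}(B_k))$. Since the small Dunkl ball $\mathcal{O}(B(x, \tilde{C}_0 r_k))$ is a disjoint union of the Euclidean balls $B(\sigma(x), \tilde{C}_0 r_k)$ over orbit representatives, Lebesgue differentiation on each component together with doubling of $d\omega$ shows this average tends, up to a constant, to a quantity majorized by $\sum_{\sigma \in G}|f(\sigma(x))|$. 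The second summand is dominated almost everywhere on $\mathcal{O}(B_k)$ by $M_{T, \mathcal{O}(B_0)} f(x)$, choosing $\sigma = e$ in the defining supremum. Passing to the limit via Fujii's theorem then yields \eqref{eqMB0}, with the $|G|$ factor absorbing the counting and doubling constants.

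For \eqref{eq35}, fix $\mathcal{O}(B) = \mathcal{O}(B(x_B, r_B)) \supset \mathcal{O}(x)$, $\xi \in \mathcal{O}(B)$, and $\sigma \in G$. Since $d(x, x_B), d(\xi, x_B) < r_B$, one can pick $\sigma_x, \sigma_\xi \in G$ with $\|\sigma_x(x) - x_B\|, \|\sigma_\xi(\xi) - x_B\| < r_B$; then $\sigma' := \sigma \sigma_\xi^{-1} \sigma_x \in G$ satisfies $\|\sigma(\xi) - \sigma'(x)\| < 2 r_B$. The decomposition
\begin{align*}
T(f \mathbf{1}_{\mathbb{R}^N \setminus \tilde{C}_0 \mathcal{O}(B)})(\sigma(\xi)) &= T(f \mathbf{1}_{\mathbb{R}^N \setminus \tilde{C}_0 \mathcal{O}(B)})(\sigma'(x)) \\
&\quad + \int_{y \notin \tilde{C}_0 \mathcal{O}(B)} \bigl[K(\sigma(\xi), y) - K(\sigma'(x), y)\bigr] f(y)\,d\omega(y)
\end{align*}
reduces the estimate to two pieces. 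The difference integral is controlled by the Euclidean kernel regularity \eqref{smooth x3CZ}, which applies because $d(\sigma(\xi), y) > (\tilde{C}_0 - 1) r_B$ and $\|\sigma(\xi) - \sigma'(x)\| < 2 r_B \leq d(\sigma(\xi), y)/2$ thanks to the size of $\tilde{C}_0$. Chopping $\{y : d(\sigma(\xi), y) > (\tilde{C}_0 - 1) r_B\}$ into dyadic Dunkl annuli, splitting each annulus into its Euclidean components around orbit points, and invoking $G$-invariance of $d\omega$ to compare volumes, one majorizes this integral by a geometric sum of Euclidean Hardy-Littlewood averages, and therefore by $\sum_{\tilde\sigma \in G} M f(\tilde\sigma(x))$.

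For $T(f \mathbf{1}_{\mathbb{R}^N \setminus \tilde{C}_0 \mathcal{O}(B)})(\sigma'(x))$, set $\varepsilon_0 := (\tilde{C}_0 - 1) r_B$ so that $\{y : d(\sigma'(x), y) \leq \varepsilon_0\} \subset \tilde{C}_0 \mathcal{O}(B)$. Writing
\begin{align*}
T(f \mathbf{1}_{\mathbb{R}^N \setminus \tilde{C}_0 \mathcal{O}(B)})(\sigma'(x)) = T_{\varepsilon_0} f(\sigma'(x)) - \int_{\substack{y \in \tilde{C}_0 \mathcal{O}(B) \\ d(\sigma'(x), y) > \varepsilon_0}} K(\sigma'(x), y) f(y)\,d\omega(y),
\end{align*}
the truncation error is handled by the size bound \eqref{siCZ} (the kernel being dominated by $\omega(B(\sigma'(x), \varepsilon_0))^{-1}$) and the same orbit-splitting/$G$-invariance argument, which once more yields $\sum_{\tilde\sigma \in G} M f(\tilde\sigma(x))$. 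Since $T_{\varepsilon_0} f(\sigma'(x)) \leq T^* f(\sigma'(x)) \leq \sum_{\tilde\sigma \in G} T^* f(\tilde\sigma(x))$, taking the essential supremum over $\xi$, supremum over $\sigma$, and supremum over $\mathcal{O}(B)$ establishes \eqref{eq35}. The main technical obstacle throughout is the bookkeeping between the Dunkl metric (in which the cutoffs and $M_{T, \mathcal{O}(B_0)}$ are phrased) and the Euclidean metric (in which the kernel smoothness is stated): one must choose the auxiliary reflection $\sigma'$ that brings $\sigma(\xi)$ Euclideanly close to an orbit point of $x$, and then exploit the $G$-invariance of $\omega$ to translate Dunkl-ball averages into Euclidean-ball averages at orbit points.
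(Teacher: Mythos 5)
Your overall architecture coincides with the paper's: for \eqref{eqMB0} a local/nonlocal splitting at the point, with the local piece handled by the weak $(1,1)$ bound of $T$ together with Lebesgue differentiation at the orbit points, and the nonlocal piece absorbed into $M_{T,\mathcal{O}(B_0)}f(x)$; for \eqref{eq35} a choice of $\sigma'\in G$ with $\|\sigma(\xi)-\sigma'(x)\|\lesssim r_B$, the regularity \eqref{smooth x3CZ} on dyadic annuli for the far part, the size bound \eqref{siCZ} for the transition region, and $T^*$ for the truncated part. Your two-term split with a truncation-error correction (cutoff centered at $x_B$ versus truncation centered at $\sigma'(x)$, with $\varepsilon_0=(\tilde{C}_0-1)r_B$) replaces the paper's three-term split through the intermediate ball $\mathcal{O}(B(x,8r))$, and your explicit construction $\sigma'=\sigma\sigma_\xi^{-1}\sigma_x$ replaces the paper's choice of $\sigma'$ minimizing $\|\sigma(\xi)-\tilde\sigma(x)\|$; both variants are correct and essentially equivalent, so part \eqref{eq35} is sound.

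The gap is in the limiting step of \eqref{eqMB0}: you take medians of $|T(f\mathbf{1}_{\tilde{C}_0\mathcal{O}(B_0)})|$ over shrinking \emph{Dunkl} balls $\mathcal{O}(B(x,r_k))$ and invoke Fujii's median-convergence theorem to recover the value at $x$. That convergence fails for Dunkl balls: $\mathcal{O}(B(x,r_k))$ is the union of the Euclidean balls $B(\sigma(x),r_k)$, which by $G$-invariance of $\omega$ all carry equal measure, so a median over $\mathcal{O}(B(x,r_k))$ is governed by the values of the function near \emph{every} orbit point, not only near $x$. For instance, in rank one with $\sigma(x)=-x$ and $g=\mathbf{1}_{(0,\infty)}$, for every $x>0$ the number $0$ is a median of $g$ over $\mathcal{O}(B(x,r))$ for all small $r$, while $g(x)=1$; no choice of median converges a.e.\ to the pointwise value. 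The repair is exactly what the paper does: work with Euclidean balls $B(\sigma(x),s)$ at each orbit point (equivalently, take your medians over $B(x,r_k)$ rather than $\mathcal{O}(B(x,r_k))$). Nothing else changes: the weak $(1,1)$ estimate still compares $\int_{\tilde{C}_0\mathcal{O}(B(x,r_k))}|f|\,d\omega$ with $\omega(B(x,r_k))$ up to the factor $|G|$ (by $G$-invariance and doubling), yielding $\sum_{\sigma\in G}|f(\sigma(x))|$ in the limit, and the nonlocal piece is still bounded a.e.\ on $B(x,r_k)\subset\mathcal{O}(B(x,r_k))$ by $M_{T,\mathcal{O}(B_0)}f(x)$ (taking $\sigma=e$ in the supremum). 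Note also that no subadditivity of medians is needed here: since the nonlocal piece is bounded a.e.\ on the ball by the constant $M_{T,\mathcal{O}(B_0)}f(x)$, the median of the sum is at most the median of the local piece plus that constant; it is worth stating this explicitly rather than asserting that the median of a sum is bounded by the sum of medians, which is false in general.
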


\begin{proof}[Proof of Lemma \ref{lemmaMT}]
First, we prove \eqref{eqMB0} by applying the ideas in \cite{Lerner2016}.
Let $x$ belong to the interior of the Dunkl ball $\mathcal{O}(B_0)=\{y\in \mathbb{R}^N: d(x_0, y)<r_0\}$. Then for every $\sigma \in G$, we have $\sigma(x)$ belongs to the interior of $\mathcal{O}(B_0)$. By the continuity of $T(f \mathbf{1}_{\tilde{C}_{0} \mathcal{O}(B_0)})(\cdot)$, for each $\sigma \in G$ and for any $\varepsilon >0$, we define the set
$$
E_{s}(\sigma(x)):=\{y \in B(\sigma(x),s):
|T(f \mathbf{1}_{\tilde{C}_{0} \mathcal{O}(B_0)})(y)-T(f \mathbf{1}_{\tilde{C}_{0} \mathcal{O}(B_0)})(\sigma(x))|<\varepsilon\},
$$
which satisfies
$$
\lim_{s\rightarrow 0}\frac{\omega(E_{s}(\sigma(x)))}{\omega( B(\sigma(x),s))}=1.
$$
Since $B(\sigma(x),s)$ is a Euclidean ball for each $\sigma \in G$.
Let $s$ small enough such that
$B(\sigma(x),s) \subset \mathcal{O}(B_0)$.
Therefore, it is direct to get $$\mathcal{O}(B(x,s))= \bigcup_{\sigma}B(\sigma(x),s) \subset \mathcal{O}(B_0).$$
Note that $\mathcal{O}(x) \subset \mathcal{O}(B(x,s))$. By the calculation of the first inequality on the page 344 in \cite{Lerner2016}.
For almost everywhere $y\in E_{s}(\sigma(x))$, we have
\begin{align*}
 &|T(f \mathbf{1}_{\tilde{C}_{0}  \mathcal{O}(B_0) })(\sigma(x))|\\
\leq & |T(f \mathbf{1}_{\tilde{C}_{0}  \mathcal{O}(B_0) })(\sigma(x))-
T(f \mathbf{1}_{\tilde{C}_{0}  \mathcal{O}(B_0) })(y)|+
|T(f \mathbf{1}_{\tilde{C}_{0}  \mathcal{O}(B_0)\backslash \tilde{C}_{0}\mathcal{O}(B(x,s)) })(y)|
+ |T(f \mathbf{1}_{\tilde{C}_{0} \mathcal{O}(B(x,s)) })(y)|\\
\leq & \varepsilon + M_{T, \mathcal{O}(B_0)} f(x) + \|T\|_{L^1 \rightarrow L^{1, \infty}} \frac{1}{\omega(E_{s}(\sigma(x)))}
\frac{|G|\omega(B(\sigma(x),s))}{\omega(\mathcal{O}(B(x,s)))} \int_{\tilde{C}_{0} \mathcal{O}(B(x,s))} |f(y)|d\omega (y)
\\
\leq & \varepsilon + M_{T, \mathcal{O}(B_0)} f(x) + |G|\|T\|_{L^1 \rightarrow L^{1, \infty}} \frac{\omega(B(\sigma(x),s))}{\omega(E_{s}(\sigma(x)))}
\sum_{\sigma \in G}\frac{1}{\omega(B(\sigma(x),s))} \int_{B(\sigma(x),\tilde{C}_{0}s)} |f(y)|d\omega (y).
\end{align*}
Let $\varepsilon$ and $s$ tend to $0$, by the Lebesgue differential theorem on $(\mathbb{R}^N, \|\cdot\|, d \omega)$, we can obtain
\begin{align*}
 |T(f \mathbf{1}_{\tilde{C}_{0}  \mathcal{O}(B_0) })(x)|
 \leq & \sup_{\sigma \in G}|T(f \mathbf{1}_{\tilde{C}_{0}  \mathcal{O}(B_0) })(\sigma(x))|\\
\lesssim &  |G|\|T\|_{L^1 \rightarrow L^{1, \infty}} \sum_{\sigma \in G}|f(\sigma(x))|
       +M_{T, \mathcal{O}(B_0)} f(x).
\end{align*}

Next, let us prove \eqref{eq35}.
For any $x \in \mathbb{R}^N$ and any $\mathcal{O}(B)=\{y\in \mathbb{R}^N: d(x_c, y)<r\}$ containing $x$, we have $\mathcal{O}(x) \subset \mathcal{O}(B)$.
For any $\mathcal{O}(\xi) \subset \mathcal{O}(B)$ and any $\sigma(\xi) \in \mathcal{O}(\xi) $, we assume that there exists $\sigma' \in G$ such that
$$\|\sigma(\xi) -\sigma'(x) \|=\min_{\tilde{\sigma} \in G }\|\sigma(\xi) -\tilde{\sigma}(x)\|=d(\sigma(\xi), x)=d(\xi,x).$$
Note that $\mathcal{O}(B) \subset \mathcal{O}(B(x, 8 r))$. We have
\begin{equation}\label{splitTinlemma}
\begin{aligned}
&|T(f \mathbf{1}_{\mathbb{R}^N \backslash \tilde{C}_{0} \mathcal{O}(B)})(\sigma(\xi))|\\
\leq & |T(f \mathbf{1}_{\mathbb{R}^N \backslash \tilde{C}_{0} \mathcal{O}(B(x, 8 r))})(\sigma(\xi))-
T(f \mathbf{1}_{\mathbb{R}^N \backslash \tilde{C}_{0} \mathcal{O}(B(x, 8 r))})(\sigma'(x))|\\
& +|T(f \mathbf{1}_{\mathbb{R}^N \backslash \tilde{C}_{0} \mathcal{O}(B(x, 8 r))})(\sigma'(x))|
     +|T(f \mathbf{1}_{\tilde{C}_{0} \mathcal{O}(B(x, 8 r))\backslash \tilde{C}_{0} \mathcal{O}(B)})(\sigma(\xi))|\\
=: & \mathcal{T}_1(\xi,x) + \mathcal{T}_2(\xi,x) + \mathcal{T}_3(\xi,x).
\end{aligned}
\end{equation}
For any $y\in \mathbb{R}^N \backslash \tilde{C}_{0} \mathcal{O}(B(x, 8 r))$, $\|\sigma(\xi)-\sigma'(x)\|=d(\xi,x)< 2r< d(x,y)/2=d(\sigma'(x),y)/2$. Therefore, we can use \eqref{smooth x3CZ} and the inequality on right hand side of \eqref{growth} and then get
\begin{align*}
 \mathcal{T}_1(\xi,x)
\leq &
       \int_{\mathbb{R}^N \backslash \tilde{C}_{0} \mathcal{O}(B(x, 8 r))}
       |K(\sigma(\xi),y)-K(\sigma'(x),y)||f(y)|d\omega(y)\\\nonumber
\leq & \int_{\mathbb{R}^N \backslash \tilde{C}_{0} \mathcal{O}(B(x, 8 r))}
       \Big(\frac{\|\sigma(\xi)-\sigma'(x)\|}{d(\sigma'(x),y)}\Big)^\varepsilon
    \frac1{\omega(B(\sigma'(x),d(\sigma'(x),y)))}
    |f(y)|d\omega(y)
    \\\nonumber
\leq &  \sum_{i\geq 1}\int_{\tilde{C}_{0}2^{i+3}r<d(x,y)\leq \tilde{C}_{0}2^{i+4}r }
       \frac{(2r)^\varepsilon(\tilde{C}_{0}2^{i+4}r)^{\mathbf{N}}}{d(\sigma'(x),y)^{\varepsilon+\mathbf{N}}}
    \frac{1}{\omega(B(\sigma'(x),\tilde{C}_{0}2^{i+4}r))}
    |f(y)|d\omega(y)
    \\\nonumber
\lesssim & \sum_{i\geq 1} 2^{-i\varepsilon}
     \frac{1}{\omega(\mathcal{O}(B(\sigma'(x),\tilde{C}_{0}2^{i+4}r)))}
   \int_{d(x,y)\leq \tilde{C}_{0}2^{i+4}r }
    |f(y)|d\omega(y)
    \\\nonumber
\lesssim & \sum_{\sigma \in G}
     Mf(\sigma(x)).
\end{align*}
Since $d(x,y)=d(\sigma'(x),y)$ for $\sigma' \in G$, we have that
\begin{align*}
 \mathcal{T}_2(\xi,x)
= &
       \left|\int_{\mathbb{R}^N \backslash \tilde{C}_{0} \mathcal{O}(B(x, 8 r))}
       K(\sigma'(x),y)f(y)d\omega(y)\right|\\
= &
       \left|\int_{d(\sigma'(x),y)>8\tilde{C}_{0}r}
       K(\sigma'(x),y)f(y)d\omega(y)\right|\\
\leq & T^{*}f(\sigma'(x))
\leq \sum_{\sigma \in G}T^{*}f(\sigma(x)).
\end{align*}
 For any $x,\xi \in \mathcal{O}(B)$ and any $y \in \tilde{C}_{0} \mathcal{O}(B(x, 8 r))\backslash \tilde{C}_{0} \mathcal{O}(B)$, we have
$(8\tilde{C}_{0}+2)r > d(\sigma(\xi),y)=d(\xi,y)> (\tilde{C}_{0}-2)r$ and $\mathcal{O}(B(x,(\tilde{C}_{0}-4)r)) \subset \mathcal{O} (B(\sigma(\xi),(\tilde{C}_{0}-2) r))$. By using
\eqref{si} and the inequality on the left hand side of \eqref{growth}, we have
\begin{align*}
\mathcal{T}_3(\xi,x)
\leq &
 \int_{\tilde{C}_{0} \mathcal{O}(B(x, 8 r))\backslash \tilde{C}_{0} \mathcal{O}(B)}
 |K(\sigma(\xi),y)||f(y)|d\omega(y)\\
\leq &  \int_{\tilde{C}_{0} \mathcal{O}(B(x, 8 r))\backslash \tilde{C}_{0} \mathcal{O}(B)}
\frac1{\omega(B(\sigma(\xi),d(\sigma(\xi),y)))}
     |f(y)|d\omega(y)\\
\leq &  \int_{\tilde{C}_{0} \mathcal{O}(B(x, 8 r))\backslash \tilde{C}_{0} \mathcal{O}(B)}
\frac1{\omega(B(\sigma(\xi),(\tilde{C}_{0}-2) r))}
    \Big(\frac{(\tilde{C}_{0}-2) r}{d(\sigma(\xi),y)}\Big)^N |f(y)|d\omega(y)\\
\lesssim &  \frac{|G|}{\omega(\mathcal{O}(B(x,(\tilde{C}_{0}-4) r)))}
      \int_{\tilde{C}_{0} \mathcal{O}(B(x, 8 r))}
        |f(y)|d\omega(y)\\
\lesssim &\sum_{\sigma \in G} Mf(\sigma(x)).
\end{align*}

Combining \eqref{splitTinlemma} with the estimates for $\mathcal{T}_1(\xi,x)-\mathcal{T}_3(\xi,x)$, by the definition of $M_Tf$, we have
\begin{align*}
M_{T} f(x)
\lesssim   \sum_{\sigma \in G}Mf(\sigma(x)) + \sum_{\sigma \in G}T^{*}f(\sigma(x)).
\end{align*}

Therefore, this lemma is proved.
\end{proof}

With this key lemma in hand, we are now ready to establish sparse domination for $Tf$ and $[b,T]f$.
Our proof follows the ideas in \cite{DGKLWY} and \cite{Lerner2016}.

\begin{proof}[Proof of \eqref{sparseT} in Theorem \ref{sparseforT}]
For the function $f$ with $\operatorname{supp} f \subset B:= B(x_c, r)$ which is contained in the Dunkl ball $\mathcal{O}(B)$. For any non-negative integer $i$, denote by $2^i\mathcal{O}(B)=\mathcal{O}(B(x_c, 2^ir))$ and
$$
\mathcal{R}_i(B):=2^{i+1}\mathcal{O}(B)\backslash 2^i\mathcal{O}(B)=
\left\{y\in\mathbb{R}^N:2^ir \leq d(x_c,y)<2^{i+1}r\right\}.
$$
 Then, for each $i,$ we can find at most $\mathcal{I}_i$ Dunkl balls $\mathcal{O}(B_{i,t}):=\mathcal{O}(B(x_{i,t},2^{i+2} \tilde{C}_{0}^{-1} r))$ with the centre $x_{i,t}\in \mathcal{R}_i(B)$ such that
$$\mathcal{R}_i(B) \subset \bigcup^{\mathcal{I}_i}_{t=1}\mathcal{O}(B_{i,t}).$$
Note that $\sup_{i}\mathcal{I}_i$ is a finite constant depends on $\mathbf{N}$ and $C_{0}$. In fact,
$x_{i,t}\in \mathcal{R}_i(B)$ implies that $d(x_c,x_{i,t})\approx 2^{i+1}r$. Combining with \eqref{equival O vollum}, we have $\omega(\mathcal{O}(B(x_{c},2^{i+1}r)))\approx \omega(\mathcal{O}(B(x_{i,t},2^{i+1}r)))$.
Then
\begin{align*}
\frac{\omega(\mathcal{R}_i(B))}{\omega(\mathcal{O}(B(x_{i,t},2^{i+2}\tilde{C}_{0}^{-1}r)))}
\approx \frac{\omega(\mathcal{O}(B(x_{i,t},2^{i+1}r)))}{\omega(\mathcal{O}(B(x_{i,t},2^{i+2}\tilde{C}_{0}^{-1}r)))}
\leq \left(\frac{\tilde{C}_{0}}{2}\right)^{\mathbf{N}}.
\end{align*}
Then, we can decompose $\mathbb{R}^N$ according to $\mathcal{O}(B)$ into
$$\mathbb{R}^N
=\bigcup^{\infty}_{i=0}\mathcal{R}_i(B) = \bigcup^{\infty}_{i=0}\bigcup^{\mathcal{I}_i}_{t=1}\mathcal{O}(B_{i,t}).$$

Note that for any fixed $\mathcal{O}(B_0) \in\mathbb{R}^N$, by Definition \ref{defi adjacent dyadic cube} and \eqref{BsubsetQ}, there exist finite dyadic cubes $Q^{d,0} \in \mathscr{D}^{d}$ such that
$
\mathcal{O}(B_0) \subseteq Q^{d,0} \subseteq C_0 \mathcal{O}(B_0).
$
Moreover, from \eqref{QsubB} in Lemma \ref{dyadicsystemfit}, we have $1/12 B(Q^{d,0}) \subset Q^{d,0} \subset B(Q^{d,0})$.
We claim that there exists a $1/2$-sparse family $\mathscr{P}^{d}\subset \mathscr{D}^{d}(Q^{d,0})$ such that
\begin{align}\label{irritatesparse}
 |T(f \mathbf{1}_{\tilde{C}_{0}B(Q^{d,0})})(x)| \mathbf{1}_{Q^{d,0}}(x)
\lesssim \sum_{Q^d \in \mathscr{P}^{d}}
|f|_{\tilde{C}_{0}B(Q^{d})}\mathbf{1}_{ Q^{d}}(x).
\end{align}
Applying this claim to each $\mathcal{O}(B_{i,t}) \in \{\mathcal{O}(B_{i,t})\}_{i,t}$, we get a set of $\{Q^{d}_{i,t}\}_{i,t} \in \mathscr{D}^{d}$ such that
$
\mathcal{O}(B_{i,t}) \subseteq Q^{d}_{i,t} \subseteq C_0 \mathcal{O}(B_{i,t})
$
and $1/12 B(Q^{d}_{i,t}) \subset Q^{d}_{i,t} \subset B(Q^{d}_{i,t})$ and a corresponding $1/2$-sparse family $\mathscr{P}^d_{i,t} \subset \mathscr{D}^d(Q^d_{i,t})$ such that \eqref{irritatesparse} holds:
$$ |T(f \mathbf{1}_{\tilde{C}_{0}B(Q^{d}_{i,t})})(x)| \mathbf{1}_{Q^{d}_{i,t}}(x)
\lesssim \sum_{Q^d \in \mathscr{P}^{d}}
|f|_{\tilde{C}_{0}B(Q^{d})}\mathbf{1}_{ Q^{d}}(x).$$
It is easy to check that for any $\mathcal{O}(B_{i,t})$,
$$
C_{0}\mathcal{O}(B_{i,t}) \cap \mathcal{R}_{i+\lfloor \log \tilde{C}_{0}\rfloor }(B)= \emptyset \quad\text{and}\quad
C_0\mathcal{O}(B_{i,t}) \cap \mathcal{R}_{i-\lfloor \log \tilde{C}_{0}\rfloor }(B)= \emptyset.
$$
Then, we conclude that $C_0\mathcal{O}(B_{i,t})$ overlaps finite times. Let $\mathscr{F}^d:=\bigcup_{i,t}\mathscr{P}^d_{i,t}$,
we have
\begin{align*}
 |T(f )(x)|
\lesssim \sum_{Q^d \in \mathscr{F}^d}
|f|_{\tilde{C}_{0}B(Q^{d})}\mathbf{1}_{ Q^{d}}(x).
\end{align*}
By using \eqref{BsubsetQ} again, for each Dunkl ball $B(Q^{d})$ with $Q^d \in \mathscr{F}^d$, we can find another dyadic cube $Q^d_{B(Q^{d})}$ such that $\tilde{C}_{0}B(Q^{d}) \subseteq Q^d_{B(Q^{d})} \subseteq C_0 \tilde{C}_{0}B(Q^{d})$.
Then, from the inequality in the right hand side of \eqref{growth}, we obatin
$$|f|_{\tilde{C}_{0}B(Q^{d})} \lesssim |f|_{Q^d_{B(Q^{d})}}.$$
Then, we have
\begin{align*}
 |T(f )(x)|
\lesssim \sum_{Q^d \in \mathscr{F}^d}
|f|_{Q^d_{B(Q^{d})}}\mathbf{1}_{ Q^d_{B(Q^{d})}}(x).
\end{align*}
Let $\mathscr{S}^d:=\{Q^d_{B(Q^{d})}: Q^d \in \mathscr{F}^d\}$, we get the desired result
\begin{align*}
 |Tf (x)|
\lesssim \sum_{Q^d \in \mathscr{S}^d}
|f|_{Q^{d}}\mathbf{1}_{ Q^{d}}(x),
\end{align*}
where $\mathscr{S}^d$ is a sparse family.

Now, let us prove the claim \eqref{irritatesparse}.
For any fixed $\mathcal{O}(B_0) \in\mathbb{R}^N$, we have
$
\mathcal{O}(B_0) \subseteq Q^{d,0} \subseteq C_0 \mathcal{O}(B_0)
$
and $1/12 B(Q^{d,0}) \subset Q^{d,0} \subset B(Q^{d,0})$.
Then, we have
$$\omega(\mathcal{O}(B_0)) \approx \omega(Q^{d,0}) \quad\text{and}\quad
 \omega(B(Q^{d,0})) \approx \omega(Q^{d,0}) \quad\Longrightarrow \quad
 \omega(\mathcal{O}(B_0)) \approx
 \omega(B(Q^{d,0})).$$
Thus, it follows that for a constant $C_{E^d}$ which will be fixed later on, we define the set $E^d$ by
\begin{align*}
E^d:= & \left\{x \in \mathcal{O}(B_0) :
\sum_{\sigma \in G}|f(\sigma(x))| > C_{E^d}|f|_{\tilde{C}_{0}B(Q^{d,0})}\right\}\\
&   \bigcup \left\{x \in \mathcal{O}(B_0) : M_{T, B(Q^{d,0})} f(x) >
    C_{E^d}|f|_{\tilde{C}_{0}B(Q^{d,0})}\right\}
\end{align*}
satisfying
\begin{align}\label{Ed}
\omega(E^d)\leq \frac{1}{4\tilde{C}_{d}}\omega(\mathcal{O}(B_0))
\end{align}
 with $\tilde{C}_{d}$ appearing in \eqref{doublingQ}. Indeed, we can achieve this. First, it is easy to check that
\begin{align*}
\omega\left( \left\{x \in \mathcal{O}(B_0) :
\sum_{\sigma \in G}|f(\sigma(x))| > C_{E^d}
|f|_{\tilde{C}_{0}B(Q^{d,0})}\right\}\right)
\lesssim \frac{\omega(\tilde{C}_{0}B(Q^{d,0}))}{C_{E^d}}
\lesssim \frac{\omega(\mathcal{O}(B_0))}{C_{E^d}}
 .
\end{align*}
By \eqref{MTB0<MT}, \eqref{eq35} and the weak $(1,1)$ bounds for $M$ and $T^*$, we have that
\begin{align*}
&\omega\left( \left\{x \in \mathcal{O}(B_0) : M_{T, B(Q^{d,0})} f(x) >
    C_{E^d}|f|_{\tilde{C}_{0}B(Q^{d,0})}\right\}\right)\\
\lesssim &
\omega\left( \left\{x \in \mathcal{O}(B_0) :\sum_{\sigma \in G}M(f\mathbf{1}_{\tilde{C}_{0}B(Q^{d,0})})(\sigma(x)) + \sum_{\sigma \in G}T^{*}(f\mathbf{1}_{\tilde{C}_{0}B(Q^{d,0})})(\sigma(x))>
    C_{E^d}|f|_{\tilde{C}_{0}B(Q^{d,0})}\right\}\right)
    \\
\lesssim & (\|M\|_{L^1 \rightarrow L^{1,\infty}}+\|T^*\|_{L^1 \rightarrow L^{1,\infty}}) \frac{\omega(\tilde{C}_{0}B(Q^{d,0}))}{C_{E^d}}
\lesssim \frac{\omega(\mathcal{O}(B_0))}{C_{E^d}}.
\end{align*}
Note that the implicit constants are finite. Therefore, we can choose $C_{E^d}$ big enough such that \eqref{Ed} holds.

Now, let us apply the Calder\'{o}n-Zygmund decomposition to the function $\mathbf{1}_{E^d}$ on $\mathcal{O}(B_0)$ at height $\frac{1}{2\tilde{C}_{d}}$.
Note that
$$\int_{\mathcal{O}(B_0)}\mathbf{1}_{E^d}(x) d\omega(x) = \omega(E^d \cap \mathcal{O}(B_0))\leq \omega(E^d)
\leq \frac{1}{2\tilde{C}_{d}}\omega(\mathcal{O}(B_0)).$$
We decompose $\mathcal{O}(B_0)$ into dyadic cubes
$\{Q^{d,1}_{j}\}_{j\in J} \subset \mathscr{D}^d(Q^{d,0})$.
If
\begin{align}\label{selectionrule}
\int_{Q^{d,1}_{j}}\mathbf{1}_{E^d}(x) d\omega(x) > \frac{1}{2\tilde{C}_{d}}
\omega(Q^{d,1}_{j}),
\end{align}
we will select it into the set $\{P^{d}_{j}\}_{j\in J}$. Otherwise, we will decompose it into dyadic cubes
$\{Q^{d,2}_{j}\}_{j\in J} \subset \mathscr{D}^d(Q^{d,1}_j)$.
Iterating this process, we get pairwise disjoint cubes
$\{P^{d}_{j}\}_{j\in J}\subset \mathscr{D}^d(Q^{d,0})$
such that
\begin{align}\label{notequal}
\frac{1}{2\tilde{C}_{d}}\omega(P^{d}_{j}) < \omega(P^{d}_{j} \cap E^d ) \leq \frac{1}{2}\omega(P^{d}_{j}),
\end{align}
where the last inequality follows from \eqref{doublingQ}.

We also get
\begin{align}\label{contained}
\omega(E^d \backslash (\cup_{j}P^{d}_{j} ))=0.
\end{align}
Moreover, from \eqref{Ed} we have
$$\sum_j \omega(P^{d}_{j})< 2\tilde{C}_{d} \omega(E^d \cap (\cup_{j}P^{d}_{j} )) =2\tilde{C}_{d} \omega(E^d) \leq \frac{1}{2}\omega(\mathcal{O}(B_0)).$$
Then, we split
\begin{equation}\label{eq8}
\begin{aligned}
& |T(f \mathbf{1}_{\tilde{C}_{0}B(Q^{d,0})})(x)| \mathbf{1}_{Q^{d,0}}(x) \\
= & |T(f \mathbf{1}_{\tilde{C}_{0}B(Q^{d,0})})(x)| \mathbf{1}_{Q^{d,0} \backslash  \cup_{j}P^d_{j}}(x)
 +  \sum_{j}|T(f \mathbf{1}_{\tilde{C}_{0}B(Q^{d,0})})(x)| \mathbf{1}_{ P^d_{j}}(x)\\
\leq &  |T(f
   \mathbf{1}_{\tilde{C}_{0}B(Q^{d,0})})(x)| \mathbf{1}_{Q^{d,0} \backslash  \cup_{j}P^d_{j}}(x)
+   \sum_{j}|T(f
   \mathbf{1}_{\tilde{C}_{0}B(Q^{d,0})
  \backslash  \tilde{C}_{0}B(P^d_{j})})(x)| \mathbf{1}_{ P^d_{j}}(x)\\
&+ \sum_{j}|T(f
  \mathbf{1}_{\tilde{C}_{0}B(P^d_{j})})(x)| \mathbf{1}_{ P^d_{j}}(x).
\end{aligned}
\end{equation}

By \eqref{notequal} and \eqref{contained}, we have $E^d \subsetneqq \cup_{j}P^d_{j} $. Observe that for any $x\in \mathcal{O}(B_0) \backslash  \cup_{j}P^d_{j}$, then $x \notin E^d$ but $x \in \mathcal{O}(B_0)$. Then from the definition of $E^d$ we have
\begin{align}\label{fx}
\sum_{\sigma \in G}|f(\sigma(x))| \leq C_{E^d}|f|_{\tilde{C}_{0}B(Q^{d,0})}
\end{align}
and
\begin{align}\label{M_Tx}
M_{T, B(Q^{d,0})} f(x) \leq
    C_{E^d}|f|_{\tilde{C}_{0}B(Q^{d,0})}.
\end{align}
Then, by \eqref{eqMB0}, \eqref{fx} and \eqref{M_Tx}, we have
\begin{equation}\label{eqfirstterm}
\begin{aligned}
 &|T(f \mathbf{1}_{\tilde{C}_{0} B(Q^{d,0}) })(x)|
 \mathbf{1}_{\mathcal{O}(B_0) \backslash  \cup_{j}P^d_{j}}(x)\\
\leq & \left\{C |G|\|T\|_{L^1 \rightarrow L^{1, \infty}} \sum_{\sigma \in G}|f(\sigma(x))|
       + M_{T, B(Q^{d,0})} f(x)\right\}\mathbf{1}_{\mathcal{O}(B_0) \backslash  \cup_{j}P^d_{j}}(x)\\
\leq & C |G| C_{E^d} (\|T\|_{L^1 \rightarrow L^{1, \infty}}+1)
|f|_{\tilde{C}_{0}B(Q^{d,0})}\mathbf{1}_{\mathcal{O}(B_0)}(x) .
\end{aligned}
\end{equation}

For each $j$, $P^d_{j} \cap (E^d)^{c} \neq \emptyset$ follows from \eqref{notequal}, then we can choose $x_0 \in P^d_{j} \cap (E^d)^{c}$, then we can see $M_{T, B(Q^{d,0})} f(x_0) $ satisfies \eqref{M_Tx}. By the definition of $M_{T, B(Q^{d,0})}$, for each $B(P^d_{j}) \subset B(Q^{d,0})$ and any $x\in P^d_{j}\subset B(P^d_{j})$ with $P^d_{j}\subset Q^{d,0}$, we have
\begin{align}\label{eqsecondterm}
|T(f
   \mathbf{1}_{\tilde{C}_{0}B(Q^{d,0})
  \backslash  \tilde{C}_{0}B(P^d_{j})})(x)| \mathbf{1}_{ P^d_{j}}(x) \leq
  C_{E^d}|f|_{\tilde{C}_{0}B(Q^{d,0})}\mathbf{1}_{Q^{d,0}}(x) .
\end{align}
Then, from \eqref{eq8}, \eqref{eqfirstterm} and \eqref{eqsecondterm}, it follows from the fact $\mathcal{O}(B_0) \subset Q^{d,0}$ that
\begin{align}\label{irritate}
 |T(f \mathbf{1}_{\tilde{C}_{0}B(Q^{d,0})})(x)| \mathbf{1}_{Q^{d,0}}(x)
\lesssim
|f|_{\tilde{C}_{0}B(Q^{d,0})}\mathbf{1}_{Q^{d,0}}(x)
+ \sum_{j}|T(f
  \mathbf{1}_{\tilde{C}_{0}B(P^d_{j})})(x)| \mathbf{1}_{ P^d_{j}}(x),
\end{align}
where the omitted constant depends on $|G|$, $N$, $\|M\|_{L^1 \rightarrow L^{1,\infty}}$, and $\|T^*\|_{L^1 \rightarrow L^{1,\infty}}$.

By iterating \eqref{irritate} for $P_j^d$, we can obtain the families $\{P_j^{d,k}\} \subset \mathscr{D}^d(Q^{d,0})$ with $\{P_j^{d,0}\}=\{Q^{d,0}\},\{P_j^{d,1}\}=\{P_j^d\}$ as defined above, and $\{P_j^{d,k}\}$ are the cubes obtained at the $k$-th stage of the iterative process for $k \geq 2$. Then,
\begin{align}\label{irritateresult}
 |T(f \mathbf{1}_{\tilde{C}_{0}B(Q^{d,0})})(x)| \mathbf{1}_{Q^{d,0}}(x)
\lesssim \sum_{k\geq 0} \sum_{j}
|f|_{\tilde{C}_{0}B(P_j^{d,k})}\mathbf{1}_{ P^{d,k}_{j}}(x).
\end{align}
Note that for each $P_j^{d,k}$, we have
$$P_j^{d,k} \backslash \cup_i P_i^{d,k+1} \subset P_j^{d,k}
\quad \text{and}\quad
\omega(P_j^{d,k} \backslash \cup_i P_i^{d,k+1}) >\frac{1}{2}
\omega(P_j^{d,k} ).
$$
Moreover, $\{P_j^{d,k} \backslash \cup_i P_i^{d,k+1}\}_{j,k}$ is a collection of disjoint sets.
Now, let $\mathscr{P}^d$ be the union of the families
$\{P_j^{d,k}\}_{j}$ for $k=0,1, \ldots$. We have $\mathscr{P}^d$ is a $\frac{1}{2}$-sparse family. Then, we can write \eqref{irritateresult} as
\begin{align*}
 |T(f \mathbf{1}_{\tilde{C}_{0}B(Q^{d,0})})(x)| \mathbf{1}_{Q^{d,0}}(x)
\lesssim \sum_{Q^d \in \mathscr{P}^d}
|f|_{\tilde{C}_{0}B(Q^{d})}\mathbf{1}_{ Q^{d}}(x).
\end{align*}
Thus, we prove the claim.
\end{proof}

\begin{proof}[Proof of \eqref{sparsebT} in Theorem \ref{sparseforT}]
We continue to use the notations in the proof of \eqref{sparseT} in Theorem \ref{sparseforT}. Recall that
for each $\mathcal{O}(B_0) \subseteq \mathbb{R}^N$, we have $
\mathcal{O}(B_0) \subseteq Q^{d,0} \subseteq C_0 \mathcal{O}(B_0)
$
and $1/12 B(Q^{d,0}) \subset Q^{d,0} \subset B(Q^{d,0})=\mathcal{O}(B(x_{Q^{d,0}},2\ell(Q^{d,0})))$. There also exists $Q^d_{B(Q^{d,0})}$ such that $\tilde{C}_{0}B(Q^{d,0}) \subset Q^d_{B(Q^{d,0})}\subset C_0\tilde{C}_{0}B(Q^{d,0}) $.
First, similar to \eqref{eq8}, we split $[b,T]$:
\begin{align*}
& |[b,T](f \mathbf{1}_{\tilde{C}_{0}B(Q^{d,0})})(x)| \mathbf{1}_{Q^{d,0}}(x) \\
\leq &  |[b,T](f
   \mathbf{1}_{\tilde{C}_{0}B(Q^{d,0})})(x)| \mathbf{1}_{Q^{d,0} \backslash  \cup_{j}P^d_{j}}(x)
+   \sum_{j}|[b,T](f
   \mathbf{1}_{\tilde{C}_{0}B(Q^{d,0})
  \backslash  \tilde{C}_{0}B(P^d_{j})}(x)| \mathbf{1}_{ P^d_{j}}(x)\\
&+ \sum_{j}|[b,T](f
  \mathbf{1}_{\tilde{C}_{0}B(P^d_{j})})(x)| \mathbf{1}_{ P^d_{j}}(x)\\
\leq & |b(x)-b_{Q^d_{B(Q^{d,0})}}| |T(f
   \mathbf{1}_{\tilde{C}_{0}B(Q^{d,0})})(x)| \mathbf{1}_{Q^{d,0} \backslash  \cup_{j}P^d_{j}}(x)\\
 &  +|T((b-b_{Q^d_{B(Q^{d,0})}})f
   \mathbf{1}_{\tilde{C}_{0}B(Q^{d,0})})(x)| \mathbf{1}_{Q^{d,0} \backslash  \cup_{j}P^d_{j}}(x)\\
&+    |b(x)-b_{Q^d_{B(Q^{d,0})}}|\sum_{j}|T(f
   \mathbf{1}_{\tilde{C}_{0}B(Q^{d,0})
  \backslash  \tilde{C}_{0}B(P^d_{j})})(x)| \mathbf{1}_{ P^d_{j}}(x)\\
  &+  \sum_{j} |T((b-b_{Q^d_{B(Q^{d,0})}})f
   \mathbf{1}_{\tilde{C}_{0}B(Q^{d,0})
  \backslash  \tilde{C}_{0}B(P^d_{j})})(x)| \mathbf{1}_{ P^d_{j}}(x)\\
&+ \sum_{j}|[b,T](f
  \mathbf{1}_{\tilde{C}_{0}B(P^d_{j})})(x)| \mathbf{1}_{ P^d_{j}}(x)\\
=:& \mathscr{C}_1(x)
   +\mathscr{C}_2(x)
   +\mathscr{C}_3(x)
   +\mathscr{C}_4(x)
   +\mathscr{C}_5(x).
\end{align*}
For $\mathscr{C}_1(x)
+\mathscr{C}_3(x)$, by using \eqref{eqfirstterm} and \eqref{eqsecondterm}, we have
\begin{align*}
&\mathscr{C}_1(x)
+\mathscr{C}_3(x)\\
=& |b(x)-b_{Q^d_{B(Q^{d,0})}}| \left\{|T(f
   \mathbf{1}_{\tilde{C}_{0}B(Q^{d,0})})(x)| \mathbf{1}_{Q^{d,0} \backslash  \cup_{j}P^d_{j}}(x)
   +    \sum_{j}|T(f
   \mathbf{1}_{\tilde{C}_{0}B(Q^{d,0})
  \backslash  \tilde{C}_{0}B(P^d_{j})})(x)| \mathbf{1}_{ P^d_{j}}(x)\right\}\\
\lesssim &
   |b(x)-b_{Q^d_{B(Q^{d,0})}}|
   |f|_{\tilde{C}_{0}B(Q^{d,0})}\mathbf{1}_{Q^{d,0}}(x) .
\end{align*}
To estimate  $\mathscr{C}_2(x)
+\mathscr{C}_4(x)$, let $f$ in \eqref{eqfirstterm} and \eqref{eqsecondterm} be $(b-b_{Q^d_{B(Q^{d,0})}})f$ which is also supported on $B_0$. Then, we have
\begin{align*}
\mathscr{C}_2(x)
+\mathscr{C}_4(x)
\lesssim
|(b-b_{Q^d_{B(Q^{d,0})}})f|_{\tilde{C}_{0}B(Q^{d,0})}\mathbf{1}_{Q^{d,0}}(x) .
\end{align*}
Now, we can conclude that
\begin{align*}
& |[b,T](f \mathbf{1}_{\tilde{C}_{0}B(Q^{d,0})})(x)| \mathbf{1}_{Q^{d,0}}(x) \\
\lesssim &
\left\{
|f|_{\tilde{C}_{0}B(Q^{d,0})}|b(x)-b_{Q^d_{B(Q^{d,0})}}|+
|(b-b_{Q^d_{B(Q^{d,0})}})f|_{\tilde{C}_{0}B(Q^{d,0})} \right\}\mathbf{1}_{Q^{d,0}}(x)\\
&+ \sum_{j}|[b,T](f
  \mathbf{1}_{\tilde{C}_{0}B(P^d_{j})})(x)| \mathbf{1}_{ P^d_{j}}(x).
\end{align*}
Similar to the iterative process in the proof of \eqref{sparseT} in Theorem \ref{sparseforT}, there exists a $1/2$-sparse family $\mathscr{P}^d$ such that
\begin{align*}
& |[b,T](f \mathbf{1}_{\tilde{C}_{0}B(Q^{d,0})})(x)| \mathbf{1}_{Q^{d,0}}(x) \\
\lesssim &
\sum_{Q^d \in \mathscr{P}^d}
\left\{
|f|_{\tilde{C}_{0}B(Q^{d})}|b(x)-b_{Q^d_{B(Q^{d,0})}}|+
|(b-b_{Q^d_{B(Q^{d,0})}})f|_{\tilde{C}_{0}B(Q^{d})} \right\}\mathbf{1}_{Q^{d}}(x).
\end{align*}
This the the corresponding version of the claim \eqref{irritatesparse} for $[b,T]$. Then we apply the above result to each $\mathcal{O}(B_{i,t}) \in \{\mathcal{O}(B_{i,t})\}_{i,t}$ and get the $1/2$-sparse family $\mathscr{P}^d_{i,t}$. Let $\mathscr{F}^d$ the union of $\{\mathscr{P}^d_{i,t}\}_{i,t}$, for each $Q^d \in \mathscr{F}^d$, we have $ Q^d_{B(Q^{d})}\subset C_0\tilde{C}_{0}B(Q^{d}) $ with $|f|_{\tilde{C}_{0}B(Q^{d}) } \lesssim |f|_{Q^d_{B(Q^{d})}}$. Then, we have
\begin{align*}
 |[b,T](f )(x)|
\lesssim &
\sum_{Q^d \in \mathscr{F}^d}
\left\{
|f|_{\tilde{C}_{0}B(Q^{d})}|b(x)-b_{Q^{d}_{B(Q^{d})}}|+
|(b-b_{Q^{d}_{B(Q^{d})}})f|_{\tilde{C}_{0}B(Q^{d})} \right\}\mathbf{1}_{Q^{d}}(x)\\
\lesssim &
\sum_{Q^d \in \mathscr{F}^d}\left\{
|f|_{Q^{d}_{B(Q^{d})}}|b(x)-b_{Q^{d}_{B(Q^{d})}}|+
|(b-b_{Q^{d}_{B(Q^{d})}})f|_{Q^{d}_{B(Q^{d})}} \right\}\mathbf{1}_{Q^{d}}(x).
\end{align*}

Then, we have that there exists a sparse family $\mathscr{S}^d=\{Q^{d}_{B(Q^{d})}: Q^d \in \mathscr{F}^d\}$ such that for any $x\in \mathbb{R}^N$,
\begin{align*}
 |[b,T]f (x)|
\lesssim \sum_{Q^d \in \mathscr{S}^d}
\left\{
|f|_{Q^{d}}|b(x)-b_{Q^{d}}| +
|(b-b_{Q^{d}})f|_{Q^{d}} \right\}\mathbf{1}_{Q^{d}}(x).
\end{align*}
\end{proof}

\section{Proof of Theorem \ref{twoweightbound}}\label{Sec 4}

In this section, we shall prove Theorem \ref{twoweightbound}. First, to get the weighted bounds for $T$ with Dunkl-Muckenhoupt weight, we will use the sparse domination in \eqref{weightT} in Theorem \ref{sparseforT}.

Given any $\theta$-sparse family with $0<\theta<1$, we define
$$
\mathcal{A}(f)(x):=\sum_{Q^d \in \mathscr{S}^d }|f|_{Q^d}
         \mathbf{1}_{Q^d}(x)
$$
on $(\mathbb{R}^N, d, d \omega)$ which is a self-adjoint operator.

For any $u \in A_{p}^d$, $1<p<\infty$, applying the proof of Theorem 3.1 on Page 461 and 462 in \cite{M1}, by \eqref{wp} and Lemma \ref{Muweight}, we have
\begin{align}\label{sparsedifi}
\left\|\mathcal{A}(f)\right\|_{L^p(\mathbb{R}^N,d,u d\omega)}
\leq C
   [u]_{A_{p}^d}^{\max\{p'/p,1\}} \|f\|_{L^p(\mathbb{R}^N,d,u d\omega)},
\end{align}
where $C$ depends on $p$.
Then, by using \eqref{sparseT} in Theorem \ref{sparseforT}, we get $T$ is bounded on $L^p(\mathbb{R}^N,d,u d\omega)$ for any $u\in A^d_p$ and $1<p<\infty$.

Next, we will give the two-weight bounds for the commutator $[b,T]$ by following the ideas in \cite{AMR}, \cite{DGKLWY} and \cite{Lerner2016}.

In the Dunkl setting, adapting the method of \cite[Lemma 3.5]{DGKLWY}, we can obtain the following result.
\begin{lemma}(\cite{DGKLWY})\label{lemmab}
Let $\mathscr{S}^d$ be a sparse family contained in a dyadic system $\mathscr{D}^d,$ $\vartheta$ is a weight, $b \in \mathrm{BMO}_{\vartheta}^d$. There exists a possibly larger sparse family $\tilde{\mathscr{S}}^d \subset \mathscr{D}^d$ containing $\mathscr{S}^d$ such that, for every $Q^d \in \tilde{\mathscr{S}}^d$
$$
|b(x)-b_{Q^d}| \lesssim \sum_{P^d \in \tilde{\mathscr{S}}^d, P^d \subset Q^d } \frac{1}{\omega(P^d)} \int_{P^d}|b(y)-b_{P^d}| d\omega (y)\mathbf{1}_{P^d}(x).
$$
\end{lemma}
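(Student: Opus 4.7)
My plan is to enlarge $\mathscr{S}^d$ by iterating a stopping-time Calder\'on-Zygmund decomposition inside every cube of $\mathscr{S}^d$. For each root $Q\in\mathscr{S}^d$ I would build a tree $\mathscr{T}(Q)\subset\mathscr{D}^d(Q)$ rooted at $Q$: given a node $P\in\mathscr{T}(Q)$, declare its stopping children to be the maximal dyadic subcubes $R\subsetneq P$ in $\mathscr{D}^d(P)$ satisfying $|b-b_P|_R>\alpha\,|b-b_P|_P$ for a fixed large constant $\alpha>1$ to be tuned at the end. Then set $\tilde{\mathscr{S}}^d:=\bigcup_{Q\in\mathscr{S}^d}\mathscr{T}(Q)$, which automatically contains $\mathscr{S}^d$. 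Because the selection rule depends only on $b$ and the current node $P$, the subtree of $\mathscr{T}(Q)$ hanging below any node $P$ is exactly $\mathscr{T}(P)$; this is what allows the conclusion to be asserted at an arbitrary $Q^d\in\tilde{\mathscr{S}}^d$ rather than only at roots in $\mathscr{S}^d$.

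\textbf{Pointwise bound.} Two facts follow immediately from the stopping rule: by maximality combined with the dyadic doubling \eqref{doublingQ}, every stopping child $R$ of $P$ satisfies $|b-b_P|_R\le\tilde{C}_d\alpha\,|b-b_P|_P$, and hence $|b_R-b_P|\le\tilde{C}_d\alpha\,|b-b_P|_P$; and by the defining inequality, $\sum_{R\text{ child}}\omega(R)<\alpha^{-1}\omega(P)$. Now fix $Q^d\in\tilde{\mathscr{S}}^d$ and, for a.e.\ $x\in Q^d$, let $Q^d=P_0\supset P_1\supset P_2\supset\cdots$ be the (possibly terminating) chain of tree-cubes containing $x$. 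Telescoping gives
$$|b_{P_k}-b_{Q^d}|\le\sum_{j=0}^{k-1}|b_{P_{j+1}}-b_{P_j}|\le\tilde{C}_d\alpha\sum_{j=0}^{k-1}|b-b_{P_j}|_{P_j}.$$
If the chain is infinite, $\ell(P_k)\to 0$ and Lebesgue differentiation on the doubling space $(\mathbb{R}^N,\|\cdot\|,d\omega)$ forces $b_{P_k}\to b(x)$ a.e.; if the chain terminates at $P_K$ then $x$ lies in $P_K$ minus its stopping children, so by maximality and Lebesgue differentiation $|b(x)-b_{P_K}|\le\alpha\,|b-b_{P_K}|_{P_K}$ a.e. Passing to the limit (or to $k=K$) in both cases yields
$$|b(x)-b_{Q^d}|\lesssim\sum_{k\ge 0}|b-b_{P_k}|_{P_k}\mathbf{1}_{P_k}(x)=\sum_{P\in\tilde{\mathscr{S}}^d,\,P\subset Q^d}|b-b_P|_P\mathbf{1}_P(x),$$
which is the stated estimate.

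\textbf{Sparseness, the main obstacle.} The nontrivial step is to check that $\tilde{\mathscr{S}}^d$ is itself sparse. Within a single tree $\mathscr{T}(Q)$ this is easy: choosing $\alpha$ large makes $E^{\ast}(P):=P\setminus\bigcup_{R\text{ child}}R$ satisfy $\omega(E^{\ast}(P))\ge(1-\alpha^{-1})\omega(P)$, with the $E^{\ast}(P)$ pairwise disjoint as $P$ ranges over $\mathscr{T}(Q)$. The subtlety is that distinct roots $Q_1,Q_2\in\mathscr{S}^d$ may have overlapping trees, so the $E^{\ast}$-sets for different trees must be reconciled. I would handle this by intersecting $E^{\ast}(P)$ with the pre-existing $\theta$-sparse witness set $E(Q)\subset Q$ associated to the smallest root $Q\in\mathscr{S}^d$ containing $P$; since the trees $\mathscr{T}(Q)$ agree on any shared subcube (thanks to the root-independence of the stopping rule noted above), the resulting sets have overlap controlled by the overlap of $\{E(Q)\}_{Q\in\mathscr{S}^d}$ together with a constant depending on $\alpha$ and $\tilde{C}_d$. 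Taking $\alpha$ sufficiently large in terms of $\theta$ and $\tilde{C}_d$ produces a uniform sparseness constant $\theta'>0$ for $\tilde{\mathscr{S}}^d$, completing the proof.
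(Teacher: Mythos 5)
The paper does not actually prove this lemma: it quotes it from \cite{DGKLWY} (Lemma 3.5 there) with the remark that the method adapts to the Dunkl setting. Your stopping-time trees with the self-similar selection rule and the telescoping along the chain are indeed that method, but two steps of your write-up do not hold up.

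First, both of your limiting steps invoke ``Lebesgue differentiation'' along chains of Dunkl dyadic cubes, and this is not available: by \eqref{QsubB} in Lemma \ref{dyadicsystemfit}, a cube of $\mathscr{D}^d$ of generation $k$ containing $x$ is squeezed between $\mathcal{O}(B(\cdot,\delta^k/6))$ and $\mathcal{O}(B(\cdot,2\delta^k))$, so it is a $G$-invariant set shrinking to the orbit $\mathcal{O}(x)$, not to $x$. In particular the claim $b_{P_k}\to b(x)$ a.e.\ is false in general: for $N=1$, $G=\{\pm1\}$ and $b$ odd and bounded, every Dunkl cube $P$ is symmetric so $b_P=0$, while $b(x)\neq0$ a.e. The same objection applies to the terminating-chain step ``$|b(x)-b_{P_K}|\le\alpha\,|b-b_{P_K}|_{P_K}$ by maximality and Lebesgue differentiation''. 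That step can be repaired, but only with the extra care the paper itself exercises in the proofs of Lemma \ref{lemmaMT} and of \eqref{reverseHolder}: a generation-$k$ cube $R_k\ni x$ contains, inside the Euclidean ball $B(x,3\delta^k)$, a Euclidean ball of comparable measure, so $\langle|b-b_{P_K}|\rangle_{R_k}\gtrsim\langle|b-b_{P_K}|\rangle_{R_k\cap B(x,3\delta^k)}$, and the differentiation theorem on $(\mathbb{R}^N,\|\cdot\|,d\omega)$ for nicely shrinking sets then gives $|b(x)-b_{P_K}|\lesssim\alpha\,\Omega(b;P_K)$ with a worse constant. The infinite-chain case should not be handled by the (false) convergence at all: iterating $\sum_{R}\omega(R)<\alpha^{-1}\omega(P)$ shows the points lying in tree cubes of every generation form an $\omega$-null set, so that case can simply be discarded. (A minor related point: \eqref{doublingQ} is stated for the largest dyadic child, so the parent--child measure comparison you use for an arbitrary stopping child needs the one-line argument via \eqref{QsubB} and doubling.)

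Second, the sparseness of $\tilde{\mathscr{S}}^d=\bigcup_{Q\in\mathscr{S}^d}\mathscr{T}(Q)$, which you correctly identify as the main obstacle, is not established by your sketch. Intersecting $E^{\ast}(P)$ with the witness set $E(Q)$ of the smallest root $Q\in\mathscr{S}^d$ containing $P$ gives no lower bound on $\omega(E^{\ast}(P)\cap E(Q))$: $E(Q)$ is merely some subset of $Q$ with $\omega(E(Q))\ge\theta\,\omega(Q)$ and may be disjoint from $P$ altogether (e.g.\ $P$ deep inside $Q\setminus E(Q)$), so the proposed sets can be empty and no uniform $\theta'$ follows. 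The real difficulty is that a fixed small cube can be contained in infinitely many roots of $\mathscr{S}^d$, so the crude bound ``each tree contributes at most $2\omega(R)$ inside $R$'' does not sum, and self-similarity only identifies trees below common nodes, which distinct roots need not share. Controlling the augmented family (via the Carleson-packing characterization of sparseness and the specific structure of the stopping construction) is precisely the content of \cite{DGKLWY}, following Lerner--Ombrosi--Rivera-R\'{\i}os, and your proof would need to reproduce that accounting rather than the witness-set intersection.
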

Then, it is easy to see that
\begin{equation}\label{bA}
\begin{aligned}
&\int_{Q^d}|b(x)-b_{Q^d}||f(x)|d \omega (x)\\
\lesssim & \int_{Q^d}\sum_{P^d \in \tilde{\mathscr{S}}^d, P^d \subset Q^d } \frac{1}{\omega(P^d)} \int_{P^d}|b(y)-b_{P^d}| d\omega (y)\mathbf{1}_{P^d}(x) |f(x)|d \omega (x)\\
\leq & \|b\|_{\mathrm{BMO}_{\vartheta}^d}
\sum_{P^d \in \tilde{\mathscr{S}}^d, P^d \subset Q^d }
          \frac{\vartheta(P^d)}{\omega(P^d)}
\int_{Q^d}
          |f(x)|\mathbf{1}_{P^d}(x)d\omega (x)\\
= & \|b\|_{\mathrm{BMO}_{\vartheta}^d}\int_{Q^d}
          \mathcal{A}(|f|)(x)\vartheta(x)d\omega (x).
\end{aligned}
\end{equation}
By \eqref{sparsebT} in Theorem \ref{sparseforT}, we have
\begin{align*}
 &\|[b,T]f\|_{L^{p}(\mathbb{R}^N,d,v d\omega)}\\
\lesssim &\bigg\|\sum_{Q^d \in \mathscr{S}^d}
|f|_{Q^{d}}|b-b_{Q^{d}}|\mathbf{1}_{Q^{d}} \bigg\|_{L^{p}(\mathbb{R}^N,d,v d\omega)} +
\bigg\|\sum_{Q^d \in \mathscr{S}^d}|(b-b_{Q^{d}})f|_{Q^{d}} \mathbf{1}_{Q^{d}}\bigg\|_{L^{p}(\mathbb{R}^N,d,v d\omega)}\\
=: & \mathcal{B}_{1} +\mathcal{B}_{2}.
\end{align*}

By using \eqref{sparsedifi} and \eqref{bA}, note that $\vartheta =(u/v)^{1/p}$, we have the estimate
\begin{align*}
\mathcal{B}_{1} =& \sup_{\|g\|_{L^{p'}(\mathbb{R}^N,d,v d\omega)}=1}\left|\int_{\mathbb{R}^N}\sum_{Q^d \in \mathscr{S}^d}
|f|_{Q^{d}}|b(x)-b_{Q^{d}}|\mathbf{1}_{Q^{d}}(x) g(x)v(x) d\omega(x)\right|\\
\leq &\sup_{\|g\|_{L^{p'}(\mathbb{R}^N,d,v d\omega)}=1}
\sum_{Q^d \in \mathscr{S}^d}
|f|_{Q^{d}}\int_{Q^{d}}|b(x)-b_{Q^{d}}| |g(x)|v(x) d\omega(x)\\
\lesssim & \sup_{\|g\|_{L^{p'}(\mathbb{R}^N,d,v d\omega)}=1}\sum_{Q^d \in \mathscr{S}^d}|f|_{Q^{d}}
 \|b\|_{\mathrm{BMO}_{\vartheta}^d}
\int_{Q^d}  \mathcal{A}(|g|v)(x)  \vartheta(x)d\omega(x) \\
= & \sup_{\|g\|_{L^{p'}(\mathbb{R}^N,d,v d\omega)}=1}
 \|b\|_{\mathrm{BMO}_{\vartheta}^d}
\int_{\mathbb{R}^N}   \left( \mathcal{A}(|f|)(x) \vartheta(x)\right) \mathcal{A}(|g|v)(x)d\omega(x)\\
\leq &\|b\|_{\mathrm{BMO}_{\vartheta}^d}\sup_{\|g\|_{L^{p'}(\mathbb{R}^N,d,v d\omega)}=1}
\|g\|_{L^{p'}(\mathbb{R}^N,d,v d\omega)}
\| \mathcal{A}( \mathcal{A}(|f|) \vartheta)\|_{L^{p}(\mathbb{R}^N,d,v d\omega)}\\
\lesssim & \|b\|_{\mathrm{BMO}_{\vartheta}^d}[v]_{A_{p}^d}^{\max\{p'/p,1\}}[u]_{A_{p}^d}^{\max\{p'/p,1\}}
\| f\|_{L^{p}(\mathbb{R}^N,d,u d\omega)}.
\end{align*}

Similarly, we have the estimate for $\mathcal{B}_{2}$,
\begin{align*}
\mathcal{B}_{2} =& \sup_{\|g\|_{L^{p'}(\mathbb{R}^N,d,v d\omega)}=1}\left|\int_{\mathbb{R}^N}\sum_{Q^d \in \mathscr{S}^d} \frac{1}{\omega(Q^d)}\int_{Q^d}
|b(y)-b_{Q^{d}}||f(y)|d\omega(y)\mathbf{1}_{Q^{d}}(x) g(x)v(x) d\omega(x)\right|\\
\lesssim & \sup_{\|g\|_{L^{p'}(\mathbb{R}^N,d,v d\omega)}=1}
 \int_{\mathbb{R}^N}\sum_{Q^d \in \mathscr{S}^d} \frac{1}{\omega(Q^d)}\|b\|_{\mathrm{BMO}_{\vartheta}^d}\int_{Q^d} \mathcal{A}(|f|)(y)
 \vartheta(y)d\omega(y)\mathbf{1}_{Q^{d}}(x) |g(x)|v(x) d\omega(x)\\
= & \sup_{\|g\|_{L^{p'}(\mathbb{R}^N,d,v d\omega)}=1}
 \|b\|_{\mathrm{BMO}_{\vartheta}^d} \int_{\mathbb{R}^N}
 \mathcal{A}(\mathcal{A}(|f|) \vartheta)(x) |g(x)|v(x) d\omega(x)\\
\lesssim & \|b\|_{\mathrm{BMO}_{\vartheta}^d}[v]_{A_{p}^d}^{\max\{p'/p,1\}}[u]_{A_{p}^d}^{\max\{p'/p,1\}}
\| f\|_{L^{p}(\mathbb{R}^N,d,u d\omega)}.
\end{align*}
Thus, the proof of \eqref{weightbT} in Theorem \ref{twoweightbound} is complete.

Now, let us prove the result in Remark \ref{BMOdunklremark}. First, let us give a definition.
\begin{definition}
Let $b$ be a real-valued measurable function.
For $B \subseteq \mathbb{R}^N$ with $\omega(B)<\infty$, we define a median value $m_b(B)$ of $b$ over $B$ to be a real number satisfying
$$
\omega\left(\left\{x \in B: b(x)>m_b(B)\right\}\right) \leq \frac{1}{2} \omega(B) \quad {\rm{ and }} \quad \omega\left(\left\{x \in B: b(x)<m_b(B)\right\}\right) \leq \frac{1}{2} \omega(B) .
$$
\end{definition}
For given $b \in L_{loc}^1(\mathbb{R}^N, d \omega)$ and for any ball $B$, denote by $\Omega(b, B)$ the oscillation
$$
\Omega(b, B):=\frac{1}{\omega(B)} \int_B|b(x)-b_B| d \omega(x).
$$
Let $R_j$, $j=1,\ldots,N$ be the Dunkl-Riesz transform. Let $B_0:=B(x_0,r)$ be any ball centred at $x_0$ with radius $r>0$.
Then, we choose $\tilde{B}_0=B\left(\tilde{x}_0, r\right)$ with $\|\tilde{x}_0-x_0\|=5r$ such that $y_j-x_j \geq r$ and $\|x-y\| \approx r$ for $x \in B_0$ and $y \in \tilde{B}_0$.

Now, we choose two measurable sets
$$
E_1 \subseteq\left\{y \in \tilde{B}_0: b(y)<m_b(\tilde{B}_0)\right\} \quad \text { and } \quad E_2 \subseteq \left\{y \in \tilde{B}_0: b(y) \geq m_b(\tilde{B}_0)\right\}
$$
such that $\omega(E_i)=\frac{1}{2} \omega(\tilde{B}_0)$, $i=1,2$, and that $E_1 \cup E_2=\tilde{B}_0$, $E_1 \cap E_2=\emptyset$.
Moreover, we define
$$
B_1:=\left\{x \in B_0: b(x) \geq m_b(\tilde{B}_0)\right\} \quad \text { and } \quad B_2:=\left\{x \in B_0: b(x) \leq m_b(\tilde{B}_0)\right\}
$$

Now based on the definitions of $E_i$ and $B_i$, we have
\begin{align*}
& b(x)\geq m_b (\tilde{B}_0 ) > b(y), \quad(x, y) \in B_{1} \times E_{1} ;\\
& b(x)\leq m_b (\tilde{B}_0 ) \leq b(y), \quad(x, y) \in B_{2} \times E_{2}.
\end{align*}
Thus, for all $(x, y) \in B_i \times E_i, i=1,2$, we have that $b(x)-b(y)$ does not change sign and that
\begin{align*}
|b(x)-b(y)| & =\left|b(x)-m_b(\tilde{B}_0)+m_b(\tilde{B}_0)-b(y)\right| \\
& =\left|b(x)-m_b(\tilde{B}_0)\right|+\left|m_b(\tilde{B}_0)-b(y)\right| \geq  \left|b(x)-m_b(\tilde{B}_0)\right|.
\end{align*}
It is easy to check that
$$
\Omega(b, B_0)\leq
\frac{2}{\omega(B_0)} \int_{B_0} \left|b(x)-m_b(\tilde{B}_0)\right| d \omega(x).
$$
From \cite[Theorem 1.2]{HLLW}, we know that for every $(x,y)\in B_0 \times \tilde{B}_0$, the convolution kernel of the Dunkl-Riesz transform $R_j(x,y)$ does not change sign and has lower bounds
$$
\left|R_j(x, y)\right| \gtrsim \frac{1}{\omega(B(x_0, r))}.
$$
Let $f_i=\mathbf 1_{E_i}$ for $i=1,2$. Since $\|\tilde{x}_0-x_0\|=5r$, we have $\omega(B_0)\approx \omega(\tilde{B}_0)$. By using above estimates, we have
\begin{align*}
 \frac{1}{\omega(B_0)} \sum_{i=1}^2 \int_{B_0}  \left|[b, R_j] f_i(x)\right| d \omega(x)
\gtrsim &\left |\Omega(b, B_0)\right| .
\end{align*}

Recall that $u=\vartheta^p v$. Since $u$ is a radial weight, then $u$ satisfies the reverse H\"{o}lder inequality \eqref{reverseHolder}. Applying \cite[Theorem 7.3.3]{GrafakosCla2014} and the argument of \cite[Page 109 and 115]{Lerner2019}, we obtain
\begin{equation}\label{reverseu}
\begin{aligned}
\frac{1}{\omega(B_0)}\int_{B_0}    u(x) d \omega(x)\leq& \left(\frac{1}{\omega(B_0)}\int_{B_0}    u^{1/(p+1)}(x) d \omega(x) \right)^{p+1}\\
 \leq &
\left(\frac{1}{\omega(B_0)}\int_{B_0}   \vartheta(x) d \omega(x)\right)^{ p}
\left(\frac{1}{\omega(B_0)}\int_{B_0}   v(x) d \omega(x)\right).
\end{aligned}
\end{equation}
Next, from H\"{o}lder's inequality and the weighted boundedness of $[b, R_j]$ which is a spacial case of \eqref{weightbT} in Theorem \ref{twoweightbound}, by \eqref{reverseu}, we deduce that
\begin{align*}
&\frac{1}{\omega(B_0)} \sum_{i=1}^2 \int_{B_0} \left|[b, R_j] f_i(x)\right| d \omega(x)\\
\lesssim & \frac{1}{\omega(B_0)} \sum_{i=1}^2
\left(\int_{B_0}   \left|[b, R_j] f_i(x)\right|^p v(x) d \omega(x)\right)^{1 / p}
\left(\int_{B_0}   v^{-1/(p-1)}(x) d \omega(x)\right)^{(p-1) / p} \\
\lesssim &  \left\|[b, R_j]\right\|_{L^p(\mathbb{R}^N, d,ud\omega) \rightarrow L^p (\mathbb{R}^N,d, vd\omega)}
\frac{1}{\omega(B_0)}
\sum_{i=1}^2
\left(\int_{E_i}    u(x) d \omega(x)\right)^{1 / p} \left(\int_{B_0}   v^{-1/(p-1)}(x) d \omega(x)\right)^{(p-1) / p}\\
\lesssim &  \left\|[b, R_j]\right\|_{L^p(\mathbb{R}^N, d,ud\omega) \rightarrow L^p (\mathbb{R}^N,d, vd\omega)}
\left(\frac{1}{\omega(B_0)}\int_{B_0}    u(x) d \omega(x)\right)^{1 / p} \left(\frac{1}{\omega(B_0)}\int_{B_0}   v^{-1/(p-1)}(x) d \omega(x)\right)^{(p-1) / p}\\
\lesssim & \left\|[b, R_j]\right\|_{L^p(\mathbb{R}^N, d,ud\omega) \rightarrow L^p (\mathbb{R}^N,d, vd\omega)} [v]_{A_p^d}^{1/p}
\frac{\vartheta(B_0)}{\omega(B_0)}.
\end{align*}
For any $B_0 \in \mathbb{R}^N$, combining with $\omega(B_0)\approx \omega(\tilde{B}_0)\geq \omega(E_i)$, we have
\begin{align*}
&\frac{1}{\vartheta(B_0)} \int_{B_0}|b(x)-b_{B_0}| d \omega(x)\\
=&\frac{\omega(B_0)}{\vartheta(B_0)}|\Omega(b, B_0)|\\
\lesssim & \frac{\omega(B_0)}{\vartheta(B_0)}\frac{1}{\omega(B_0)} \sum_{i=1}^2 \int_{B_0} \left|[b, R_j] f_i(x)\right| d \omega(x)\\
\lesssim & \left\|[b, R_j]\right\|_{L^p(\mathbb{R}^N, d,ud\omega) \rightarrow L^p (\mathbb{R}^N,d, vd\omega)}
[v]_{A_p^d}^{1/p}
\frac{\vartheta(B_0)}{\omega(B_0)} \frac{\omega(B_0)}{\vartheta(B_0)}\\
=& \left\|[b, R_j]\right\|_{L^p(\mathbb{R}^N, d,ud\omega) \rightarrow L^p (\mathbb{R}^N,d, vd\omega)}
[v]_{A_p^d}^{1/p}.
\end{align*}
The proof is complete.

\section{Proof of Theorem \ref{Extrapolation}}\label{proofExtrapolation}
First, we give the definition of two extrapolation methods. See \cite{KokiMastMesk2020} for more details. Given a family of Banach function spaces $\left\{X_\alpha\right\}_{\alpha \in \mathcal{I}}$ which satisfies: there exists a Banach space $X \subset X_\alpha$ for all $\alpha \in \mathcal{I}$ and $\sup _{\alpha \in \mathcal{I}} \|$ id: $X \rightarrow X_\alpha \|<\infty$. Then, we define
$$
\Delta\left(\left\{X_\alpha\right\}_{\alpha \in \mathcal{I}}\right):=\left\{f \in \bigcap_{\alpha \in \mathcal{I}} X_\alpha :\|f\|_{\Delta\left(\{X_\alpha\}_{\alpha \in \mathcal{I}}\right)}:=\sup _{\alpha \in \mathcal{I}}\|f\|_{X_\alpha}<\infty\right\} .
$$
Note that $\Delta\left(\left\{X_\alpha\right\}_{\alpha \in \mathcal{I}}\right)$ is a Banach function space equipped with the norm $\|\cdot\|_{\Delta\left(\{X_\alpha\}_{\alpha \in \mathcal{I}}\right)}$.

Given a family of Banach function spaces $\left\{X_\alpha\right\}_{\alpha \in \mathcal{I}}$ satisfies that there exists a Banach space $Y$ such that $X_\alpha \hookrightarrow Y$ and $\sup _{\alpha \in \mathcal{I}}\left\|\mathrm{id}: X_\alpha \rightarrow Y\right\|<\infty$, then we define $\Sigma\left(\left\{X_\alpha\right\}_{\alpha \in \mathcal{I}}\right)$ to be a space of all $f \in \mathcal{X}$ with the representation of the form
\begin{align}\label{representation}
f=\sum_{\alpha \in \mathcal{I}} f_\alpha \quad \text { where } f_\alpha \in X_\alpha\text { and }\sum_\alpha\left\|f_\alpha\right\|_{X_\alpha}<\infty.
\end{align}
From our hypothesis \eqref{representation}, we know that there are countably many $f_\alpha\neq 0$ in $\sum_{\alpha \in \mathcal{I}} f_\alpha$. Moreover, we know that $\sum_{\alpha \in \mathcal{I}} f_\alpha$ converges in $Y$. Thus, the space $\Sigma\left(X_\alpha\right)$ is the Banach space with the norm
$$
\|f\|_{\Sigma\left(\left\{X_\alpha\right\}_{\alpha \in \mathcal{I}}\right)}:=\inf \left\{\sum_{\alpha \in \mathcal{I}}\left\|f_\alpha\right\|_{X_\alpha} : f=\sum_{\alpha \in \mathcal{I}} f_\alpha\right\},
$$
where the infimum is taken over all the possible representations of $f$ in the form \eqref{representation}.

We borrow some ideas from \cite{KokiMastMesk2020} to prove Theorem \ref{Extrapolation}.
By Theorem \ref{sparseforT} and \eqref{wp}, we have that there exists a spares family $\mathscr{S}^d \subset \mathscr{D}^d$ such that
 \begin{align}\label{eqTM}
\int_{\mathbb{R}^N}|T f(x)| u(x) d \omega(x) \leq C [u]_{A_{p}} \int_{\mathbb{R}^N}M_d f(x)u(x) d \omega(x)
\end{align}
holds for any function $f$ with compact support and $u \in A^d_{p}$ for $1<p<\infty$.

First, for any Banach function space $X$, we assume that $M_d$ is bounded in $X$ and $X'$. We give the construction of the mapping $\varphi$ on $X'$ as in \cite{Rubio1982} and \cite[Lemma 7.5.4]{GrafakosCla2014}:
\begin{align}\label{eqDefR}
\varphi:=\sum_{k=0}^{\infty}\frac{M_d^{k}g}{(2\|M_d\|_{X'})^k}
\end{align}
for any $g\in X'$ and we define $M_d^0g:=|g|$.
By the assumption that $M_d$ is bounded in $X$ and $X'$ and by the definition of $A_1^d$, $\varphi$ satisfies the properties:
\begin{equation}\label{Rg<g}
\|\varphi (g)\|_{X'}\leq 2\|g\|_{X'},
\end{equation}
\begin{equation}\label{g<Rg}
|g|\leq \varphi(g),
\end{equation}
\begin{equation}\label{RgA1}
\varphi(g) \in A_1^d \quad \text{with} \quad [\varphi(g)]_{A_1^d}\leq 2\|M_d\|_{X'}.
\end{equation}
Then, by \eqref{wpq}, we have $\varphi(g) \in A_p^d$ and $[\varphi(g)]_{A_p^d}\leq [\varphi(g)]_{A_1^d}$ for any $1<p<\infty$.

By the assumptions of the theorem, using \eqref{Rg<g}-\eqref{RgA1}, \eqref{wpq} and \eqref{eqTM}, we have for any $f\in X$, there exists $g \in X'$ such that,
\begin{equation}\label{TXalpha}
\begin{aligned}
\|Tf\|_{X}= & \sup_{\|g\|_{X'}=1}\left|\int_{\mathbb{R}^N}Tf(x)g(x)d\omega(x)\right|\\
\leq& \sup_{\|g\|_{X'}=1} \int_{\mathbb{R}^N}|Tf(x)|\varphi(g)(x)d\omega(x)\\
\leq& C\sup_{\|g\|_{X'}=1} [\varphi(g)]_{A_1}\int_{\mathbb{R}^N}M_df(x)\varphi(g)(x)d\omega(x)\\
\leq& C\sup_{\|g\|_{X'}=1} \|M_d\|_{X'}\|M_df\|_{X}\|\varphi(g)\|_{X'}\\
\leq& C\sup_{\|g\|_{X'}=1} \|M_d\|_{X'}\|M_d\|_{X}\|f\|_{X}\|g\|_{X'}\\
\leq& C \|M_d\|_{X'}\|M_d\|_{X}\|f\|_{X}.
\end{aligned}
\end{equation}
Observing \eqref{TXalpha}, to get $T$ is bounded on
$$\Delta(\{X_{\alpha}\}_{\alpha \in \mathcal{I}})\quad\text{ and }\quad
\Sigma(\{X_{\alpha}\}_{\alpha \in \mathcal{I}}),$$
 it is sufficient to prove that $M_d$ is bounded on
 \begin{align}\label{spaces}
 \Delta(\{X_{\alpha}\}_{\alpha \in \mathcal{I}}),\quad
(\Delta(\{X_{\alpha}\}_{\alpha \in \mathcal{I}}))',\quad
\Sigma(\{X_{\alpha}\}_{\alpha \in \mathcal{I}})\quad\text{ and }\quad
(\Sigma(\{X_{\alpha}\}_{\alpha \in \mathcal{I}}))'.
\end{align}
 On the one hand, it is easy to see that $M_d$ is bounded on
$\Delta(\{X_{\alpha}\}_{\alpha \in \mathcal{I}})$, $\Sigma(\{X_{\alpha}\}_{\alpha \in \mathcal{I}})$, $\Delta(\{X'_{\alpha}\}_{\alpha \in \mathcal{I}})$ and $\Sigma(\{X'_{\alpha}\}_{\alpha \in \mathcal{I}})$ from the assumptions of Theorem \ref{Extrapolation}. The first on and the third one in \eqref{spaces} are proved.

 On the other hand, it is known that (see \cite{KokiMastMesk2020})
\begin{align}\label{eqDelta}
(\Delta(\{X_{\alpha}\}_{\alpha \in \mathcal{I}}))'
\cong
((\Sigma(\{X'_{\alpha}\}_{\alpha \in \mathcal{I}}))')'
\end{align}
and that by the duality in \cite[Lemma 1]{MasSan2014}
$$
(\Sigma(\{X_{\alpha}\}_{\alpha \in \mathcal{I}}))'
\cong
\Delta(\{X'_{\alpha}\}_{\alpha \in \mathcal{I}}).
$$
Here, we write $X \cong Y$ if $X=Y$ and the identity map id: $X \rightarrow Y$ is an isometry onto.
Then, the second one and the forth one in \eqref{spaces} are proved.
The proof is complete.

\section*{Acknowledgements}
Yanping Chen was supported by the National Natural Science Foundation of China (Grant numbers [12371092], [12326366] and [12326371]).

\end{document}